\documentclass[11pt]{article}
\usepackage[letterpaper,textwidth=6.5in,textheight=9in,
            centering,ignorehead,nomarginpar]{geometry}

\usepackage[T1]{fontenc}
\usepackage{textcomp}
\usepackage{xcolor}

\usepackage{setspace}

\usepackage{url}
\usepackage{hyperref}
\usepackage{doi}

\usepackage{array}
\usepackage{natbib}
\bibpunct{(}{)}{;}{a}{,}{;}
\usepackage{graphicx}
\usepackage{bbm}
\usepackage{amsmath,amsthm,amssymb,amsfonts,latexsym}
\usepackage{mathtools}
\usepackage{booktabs}
\usepackage{icomma}
\usepackage{afterpage}
\usepackage{titling}
\thanksmarkseries{alph}
\usepackage[labelfont={small,sc},textfont={small,it},
            margin=15pt,skip=10pt,position=bottom]{caption}
\usepackage[labelfont={scriptsize,normalfont},textfont={scriptsize,it},
            margin=20pt,skip=5pt,position=bottom,
            labelformat=simple]{subcaption}

\usepackage{algorithmic}
\usepackage[boxed]{algorithm}

\newtheorem{remark}{Remark}
\newtheorem{assumption}{Assumption}

\newtheorem{proposition}{Proposition}
\newtheorem{corollary}{Corollary}

\numberwithin{condition}{section}
\numberwithin{assumption}{section}
\numberwithin{remark}{section}
\numberwithin{equation}{section}
\numberwithin{lemma}{section}
\numberwithin{definition}{section}
\numberwithin{theorem}{section}
\numberwithin{proposition}{section}
\numberwithin{table}{section}
\numberwithin{figure}{section}
\numberwithin{theorem}{section}
\numberwithin{corollary}{section}
\numberwithin{property}{section}
\numberwithin{algorithm}{section}

\newcommand{\EQ}{\begin{equation}}
\newcommand{\EN}{\end{equation}}
\newcommand{\EQS}{\begin{equation*}}
\newcommand{\ENS}{\end{equation*}}
\newcommand{\ds}{\displaystyle}

\newcommand*\xbar[1]{%
  \hbox{%
    \vbox{%
      \hrule height 0.5pt
      \kern0.5ex%
      \hbox{%
        \kern-0.1em%
        \ensuremath{#1}%
        \kern-0.1em%
      }%
    }%
  }%
}

\def\n1{n}
\def\argmax{\mathop{\rm arg\,max}}

\def\XL{\mathbb{W}}
%

\def\SX{\mathcal{X}}
\def\EWES{\text{EW-ES}_{t_0}}

\def\EWLS{\text{EW-LS}_{t_0}}

\def\EWPS{\text{EW-PS}_{t_0}}

\def\LS{\text{LS}}
\def\ES{\text{ES}}

\newcommand{\qq}{\mathfrak{q}}
\newcommand{\pp}{\mathfrak{p}}

\newcommand{\kappal}{\hat{\kappa}}
\newcommand{\kappae}{\kappa}

\newsavebox{\savepar}

\numberwithin{equation}{section}
\numberwithin{table}{section}
\numberwithin{figure}{section}

\newcommand{\myred}{\color{black}}
\newcommand{\myblue}{\color{black}}

\def\argmax{\mathop{\rm arg\,max}}

\def\W{W^{\prime}}

\usepackage[running,mathlines]{lineno}

\begin{document}
\title{
Risk Measures for DC Pension Plan Decumulation
}
\author{Peter A. Forsyth\thanks{David R. Cheriton School of Computer Science,
        University of Waterloo, Waterloo ON, Canada N2L 3G1,
        \texttt{paforsyt@uwaterloo.ca}}
  \and
        Yuying Li\thanks{David R. Cheriton School of Computer Science,
        University of Waterloo, Waterloo ON, Canada N2L 3G1,
        \texttt{yuying@uwaterloo.ca}}
}

\maketitle


\begin{abstract}
As the developed world replaces Defined Benefit (DB) pension plans
with Defined Contribution (DC) plans, there is a need to develop
decumulation strategies for DC plan holders.  Optimal decumulation
can be viewed as a problem in optimal stochastic control.  Formulation
as a control problem requires specification of an objective function,
which in turn requires a definition of reward and risk.  An intuitive
specification of reward 
is the total withdrawals over the retirement
period.  Most retirees view risk as the possibility of running
out of savings.  This paper investigates several possible
left tail risk measures, in conjunction with DC plan decumulation.
The risk measures studied include (i) expected shortfall (ii) linear
shortfall  and (iii) probability of shortfall.
We establish that,
under certain assumptions, 
the set   of optimal controls associated with all  expected reward and  expected shortfall Pareto  efficient 
frontier curves  {\myblue{is identical to}}
the set  of optimal controls for all expected reward and linear shortfall Pareto efficient frontier curves.
Optimal efficient frontiers are determined  computationally for each risk measure,
based on a parametric market model.  Robustness of these strategies
is determined by testing the strategies out-of-sample using block
bootstrapping of historical data.

\vspace{5pt}
\noindent
\textbf{Keywords:} decumulation,
stochastic control, risk

\noindent
\textbf{JEL codes:} G11, G22\\
\noindent
\textbf{AMS codes:} 91G, 65N06, 65N12, 35Q93
\end{abstract}

\section{Introduction}
Internationally, there is a growing movement to replace
Defined Benefit (DB) pension plans with Defined
Contribution (DC) plans.  A study of the P7 countries\footnote{Australia, 
Canada, Japan, Netherlands, Switzerland, UK, US}
reveals that in terms of fraction of total pension assets, 
DC plans have increased
from 37\% in 2003 to 58\% in 2023 \citep{Thinking_2024}.  In terms
of individual countries, Australia has 88\% of pension assets in
DC plans, while Japan has only 5\% of pension assets in DC plans.
In the Netherlands, all DB plans will transition to 
collective DC plans by 2028.\footnote{``The End of the Dutch Defined Benefit Model A Steeper Euro Swap Curve Ahead,''
{\url{https://www.pimco.com/eu/en/insights/the-end-of-the-dutch-defined-benefit-model-a-steeper-euro-swap-curve-ahead}}}
The trend towards DC plans seems inevitable, since corporations and
governments no longer desire to take on the risk of providing the
guarantees implicit in DB plans.

During the accumulation phase of a DC plan, the burden of deciding on
an asset allocation usually is relegated to the investor.  However, upon
retirement, the DC plan holder is faced with an even bigger challenge.
During the decumulation stage of a DC plan, the retiree must decide 
on a withdrawal schedule and  an asset allocation.  
Surveys have revealed that
retirees fear running out of savings more
than death \citep{Hill_2016}.
Consequently, it seems clear that the retiree wants
to
to withdraw as much as possible, but avoid ruin.
The decumulation
problem  has been termed ``the nastiest,
hardest problem in finance,'' by William Sharpe \citep{Sharpe2017}.

While it is often suggested that retirees purchase annuities to 
reduce the risk of depletion of savings, annuities are not popular
with DC plan holders \citep{Peijnenburg2016}.  \citet{MacDonald2013}
suggest that avoidance of annuities may be entirely rational.\footnote{See
also 
{\em``When do you need insurance?''} 
{\url{https://donezra.com/217-when-do-you-need-insurance/}}
}
For example, in the North American context, true inflation
protected annuities are virtually unobtainable.

An extensive study of decumulation strategies can be found
in \citet{bernhardt-donnelly:2018}.  Some recent
strategies which involve pooling longevity risk, such as a
modern tontine \citep{Fullmer_2019_a,Weinhart_2021,Forsyth_2024} appear
promising.  However,
these types of plans are still in their infancy.

The standard wealth management advice given to retirees is 
usually some variant of the ubiquitous \emph{4\% rule} \citep{Bengen1994}.  This rule suggests that
retirees should (i) invest in a portfolio of 50\% bonds and 50\% equities,
rebalanced annually and (ii) withdraw 4\% of the initial capital each
year (adjusted for inflation).  We can consider that this advice
is given to a 65-year old retiree, who wants to be sure that he/she
does not run out of savings if he/she lives to age 95.\footnote{The probability
of a 65-year old Canadian male living to age 95 is about $0.13$.}

This advice is justified on the
basis of historical rolling 30 year periods, using US data.
A retiree following this advice would never have run out of savings
over any of these rolling thirty year periods.  Various
adjustments to this rule have been suggested many times, see
e.g. \citet{Guyton-Klinger:2006}.  However, both the advice and 
historical tests can be criticized.  Rolling thirty-year periods
obviously have very high correlations.  Use of constant weight
stock allocation is somewhat simplistic, as is use of a constant (in real
terms) withdrawal rate.  
In fact \citet{Irlam:2014} used
dynamic programming methods to conclude that deterministic (i.e.\
glide path) allocation strategies are sub-optimal.\footnote{A constant weight
strategy is trivially deterministic.}
More recently, \citet{Anarkulova_2022_a} suggest that the safe
withdrawal rate might be much lower than the the 4\% rule. In contrast
to rolling historical periods, \citet{Anarkulova_2022_a} use block bootstrap
resampling to test withdrawal strategies.  We will also use bootstrap
resampling to test our results in this paper.

Nevertheless, the {\em four per cent rule}  has seen wide adoption since the 
original publication over thirty years ago,
and can be regarded as the default advice.

Contrary to commonly held beliefs, it appears that retirees are somewhat
flexible in annual spending.  A survey in \citet{Banerjee_2021} indicates
that retirees actually adjust their lifestyle (i.e. what are perceived
as fixed expenses) to match their cash flows.   

In fact, recent surveys indicate that, if anything, many retirees
underspend on the basis of their financial assets 
\citep{Rappaport_2019,blackrock_2021}.  \citet{Browning_2016}
suggests
that these assets are being held as a reserve against unexpected medical expenses.
However, Canada has a comprehensive public health care system, yet
\citet{RePEc:sls:secfds:10}
finds that senior Canadian couples 85 and older either save or give away
about 25\% of their income.

All these facts indicate that we should allow some flexibility
in withdrawals from pension savings, in order to ameliorate sequence of return risk.

Perhaps the most rigorous approach to the decumulation problem is to
formulate this as a problem in optimal stochastic control.  The controls
in this case, are (i) the asset allocation, i.e. the stock/bond split
and (ii) the withdrawal amounts (real) per year, subject to maximum
and minimum constraints.

Of course, the first task in formulating an optimal control problem is
to specify the objective function.  One possibility is to formulate the decumulation problem in terms of
a utility function, combining the withdrawals and final portfolio value.
However, it seems clear (from the popularity of the four per cent rule),
that investors prefer to delineate the trade off between risk (running
out of savings) and reward (maximizing withdrawals).

We will consider basically the same problem as formulated by
\citet{Bengen1994}.  As a result, the obvious measure of
reward is the total of the withdrawals (inflation adjusted) over a 30 year period.
However, the choice of risk measure is not so clear.
Since retirees are primarily concerned with running out
of savings, we should be focused on left tail
measures of risk.

The objective of this paper is to carry out a thorough investigation
of the following tail risk measures, in the context of decumulation, in terms of portfolio value at
year 30:
\begin{itemize}
   \item Expected shortfall, i.e., the mean of the worst $\alpha$ fraction of the outcomes.  Typically
         $\alpha = .05$.
    \item Linear shortfall, i.e. weighting negative portfolio values linearly.
      \item Probability of final portfolio value being negative.
\end{itemize}

\textcolor{black}{We first formalize the equivalence between expected withdrawal reward and expected shortfall risk (EW-ES)
and  expected withdrawal reward and linear shortfall risk (EW-LS) 
efficient frontiers.} 
We further compare the efficient frontiers generated using all three risk measures above.
We calibrate a parametric stochastic model for stocks and bonds based on almost
a century of data.  We solve the optimal control problem via dynamic programming
using the parametric model.  The controls are tested out-of-sample, using
block bootstrap resampling of historical data \citep{politis1994,Cogneau2010,dichtl2016,
Cederburg_2022,Anarkulova_2022_a}.

One of our main results is that, under certain conditions,
the set  of optimal controls associated with all  
expected reward and  shortfall Pareto  efficient frontier curves  
{\myblue{is identical to}}
the set of optimal controls for all expected reward and linear shortfall Pareto efficient frontier curves.
Consequently the essential difference between EW-ES and EW-LS
is in the parameter which specifies tail-risk level. This
parameter is an explicit wealth level target in EW-LS  
versus a probability level in EW-ES.

We conclude that  Linear Shortfall is an excellent  practical measure of
tail risk.  Linear Shortfall (LS) is (i) trivially time consistent (ii) weights
shortfall\footnote{Being short \$100,000 is worse than being short \$1.} (iii) is close
to optimal in terms of expected shortfall and probability of shortfall
(iv) has an intuitive interpretation and (v) has robust performance in out of sample
bootstrap resampling tests.
Consequently, we recommend use of expected total withdrawals (as a measure of reward)
and linear shortfall (LS) as a measure of risk in the context of studying
decumulation strategies.

\section{Problem Setting}
Spending rules (such as the four per cent rule) are clearly popular
with retirees.  It is interesting to note the following quotation from \citep{Anarkulova_2022_a}
\begin{quote}
{\em``Current retirement spending practices demonstrate a revealed preference for spending rules over
annuitization, such that the efficacy of spending rules is an important issue. 
$\ldots$
Obtaining reliable, quantitative evidence on the 4\% rule and
alternative withdrawal rates is of critical importance given their widespread use.''}
\end{quote}

Due to its wide acceptance in wealth management, we consider the scenario
discussed in \citep{Bengen1994}.  We consider a 65-year old
retiree who desires fixed minimum annual (real) cash flows over a 30 year
time horizon.  We also impose a cap on maximum
withdrawals in any year.  From the CPM2014 table from the Canadian Institute of
Actuaries\footnote{\url{www.cia-ica.ca/docs/default-source/2014/214013e.
pdf}.}, the probability that a 65-year old Canadian male
attains the age of 95 is about 0.13.  However, use of a 30 year
time horizon is considered a prudent test for having a low probability
of running out of savings.   In addition, observe that we will not
mortality weight future cash flows, as is done when averaging over
a population for pricing annuities.  Mortality weighting does not
seem to be a useful concept for an individual retiree.

Since we allow investing in risky assets, with a minimum cash withdrawals
each year, it is possible to exhaust savings.  In this case, we continue
to withdraw cash from the portfolio, which
is equivalent to borrowing cash.  This debt accumulates at the 
borrowing rate.  Essentially, we are assuming that the investor has
other assets, e.g. real estate, which can be used as  a hedge of last
resort. In practice, accumulated debt due to exhausting savings could
be funded using a reverse mortgage, with real estate as collateral \citep{Pfeiffer_2013}.

Note that real estate is not fungible with financial assets,
except as a last resort.
This mental bucketing of assets is a common tenet of behavioral
finance \citep{Shefrin-Thaler:1988}.  As far as the 
real estate is concerned, if investments perform well,
or the retiree passes away early, then the real estate can be a bequest.

The fact that the portfolio can become negative, and the required cash
flows can add to debt, means that any tail risk measure
will  penalize these states.  Hence, the optimal
stochastic control will find strategies which make these states as
unlikely as possible.

\subsection{Notation, Formulation}
The investor has access to two funds: a stock index and a constant maturity
bond index.  At any instant in time $t$, let the {\em amount} invested in the stock index fund
be denoted by $S_t \equiv S(t)$, and similarly the amount invested in
the bond index is denoted by $B_t \equiv B(t)$.  These amounts
are real, i.e. inflation adjusted.  The total (real) value of the portfolio $W_t$ is then
\begin{eqnarray}
   W_t = S_t + B_t ~.
\end{eqnarray}
For any time dependent function $g(t)$, we use the notation
\begin{equation}
g(t^+) \equiv \displaystyle \lim_{\epsilon \rightarrow 0^+}
          g(t + \epsilon) ~~; ~~
 g(t^-) \equiv \displaystyle  \lim_{\epsilon \rightarrow 0^+}
          g(t - \epsilon)  ~~.
\end{equation}
Consider a set of discrete withdrawal/rebalancing times $\mathcal{T}$,
\begin{equation}
  \mathcal{T} = \{t_0=0 <t_1 <t_2< \ldots <t_M=T\}  \label{T_def},
\end{equation}
where $T$ is the investment horizon.
For ease of notation, we assume that $t_i - t_{i-1} = \Delta t =T/M$ is constant.

At each rebalancing time $t_i, i=0,\ldots,M-1$, the investor first (i) withdraws an
amount of cash $\qq_i$ from the portfolio and then (ii) rebalances the
portfolio.  More precisely
\begin{eqnarray}
  W(t_i^+) & = & W(t_i^-) - \qq_i ~.
\end{eqnarray}

{\myblue{
Denote the state of the system at each time by $\SX(t), t \in [0,T]$.  Informally,
the state can be regarded as the information necessary to model the system
from time $t$ onwards \citep{Powell_2025_a}.
}
}

Let  the rebalancing control $\pp(\SX(t_i^-))$ be the fraction in stocks after
withdrawals, then,
\begin{eqnarray}
  S(t_i^+) & = & \pp_i( \SX(t_i^-)) W(t_i^+) \nonumber \\
          & & \pp_i( \SX(t_i^-)) \equiv \pp(  \SX(t_i^-), t_i) 
                 \nonumber \\
  B(t_i^+ ) & = & W(t_i^+) - S(t_i^+) ~.
            \label{p_control_def}
\end{eqnarray}
We can regard the amount withdrawn $\qq_i(\cdot)$ as an additional control i.e.
$\qq_i( \SX(t_i^-)) = \qq(\SX(t_i^-), t_i)$. 
{\myblue{Note we make the implicit assumption that the optimal controls are of
feedback form, i.e. only  a function of the state and time.}}

{\myblue{
Based on the parametric SDE model for $(S_t,B_t)$ in Appendix \ref{parametric_model_appendix}
and \citet{forsyth:2022}, we will assume in the following that
$\SX(t) = (S(t), B(t)), t \in [0,T]$, with the
realized state of the system denoted by $x = (s,b)$.
More generally, of course, it may be necessary to include other variables
to define the state (e.g. {\em lifting the state space} to include
path dependent variables).\footnote{A classic example is the pricing of an Asian option,
which depends of the observed average stock price $A_t$.  If  the stock price $S_t$ follows GBM,
then the state space for an Asian option is lifted to  $(S_t, A_t)$.}
}
}

In the special case that there are no transaction costs $\qq_i(\cdot) = \qq_i(W_i^-)$ and
$\pp_i(\cdot) = \pp(W_i^+)$, i.e. the amount withdrawn is only a function of
total wealth before withdrawals, and the rebalancing fraction is only
a function of wealth after withdrawals.  Note that it is straightforward to
include transaction costs, but if typical costs for ETFs are included,
this has a very small impact on the controls \citep{dang-forsyth:2014a}.

The control at time $t_i$ is given by $(\qq_i(\cdot), \pp_i(\cdot) )$, where
$(\cdot)$ denotes the control as a function of state.  We specify feasibility of control by prescribing the 
set of admissible {\em values} of the controls by $\mathcal{Z}$, i.e.,
\begin{equation}
  (\qq_i,\pp_i) \in \mathcal{Z}(W_i^-, W_i^+,t_i) = 
    \mathcal{Z}_{\qq}(W_i^-, t_i) \times \mathcal{Z}_{\pp} (W_i^+,t_i)~.
\label{admiss_set}
\end{equation}
where
\begin{linenomath}
\begin{align}
  \mathcal{Z}_{\qq}(W_i^-, t_i) &=
    \begin{cases}
      [\qq_{\min},\qq_{\max}] & t_i \in \mathcal{T} ~;~ t_i \neq t_M~;~
        W_i^- \geq \qq_{\max} \\
      [\qq_{\min},\max(\qq_{\min},W_i^-)] & t_i \in \mathcal{T} ~;~
        t \neq t_M ~;~ W_i^- < \qq_{\max} \\
      \{0\} & t_i = t_M
   \end{cases} ~, 
\label{Z_q_def} \\
  \mathcal{Z}_\pp (W_i^+,t_i) &=
    \begin{cases}
      [0,1] & W_i^+ > 0 ~;~ t_i \in \mathcal{T}~;~ t_i \neq t_M \\
      \{0\} & W_i^+ \leq 0 ~;~ t_i \in \mathcal{T}~;~  t_i \neq t_M \\
      \{0\} & t_i=t_M
    \end{cases} ~.
\label{Z_p_def}
\end{align}
\end{linenomath}

These expressions encapsulate the following constraints:
\begin{itemize}
   \item No shorting, no leverage (assuming solvency, i.e., when $W_i^+ > 0$),
   \item Maximum $\qq_{\max}$ and minimum $\qq_{\min}$ withdrawal constraints,
   \item In the case of insolvency $W_i^+ < 0$, trading ceases and debt accumulates
         at the borrowing rate,
   \item At $t=t_M$, all stocks are liquidated no withdrawals $\qq_M = 0$,
   \item If $W_i^- < \qq_{\max}$, the investor attempts to avoid insolvency, 
          but always withdraws at least $\qq_{\min}$.  
\end{itemize}

Recall that we assume that the retiree can finance the debt using 
other assets, e.g. a real estate hedge of last resort.  At first sight
it might seem appropriate to simply cease withdrawals if insolvent.  However,
by assumption, the retiree needs a minimum cash flow of $\qq_{\min}$ each
year.  Therefore, we penalize any set of controls which 
causes the retiree to exhaust his savings (and access the
assumed real estate hedge) in order to fund the minimum
cash flows.  Allowing debt to accumulate also penalizes early insolvency
compared to late insolvency.

The admissible control set $\mathcal{A}$ can then be written as
\begin{equation}
  \mathcal{A} = \biggl\{
    (\qq_i, \pp_i)_{0 \leq i \leq M} : (\pp_i, \qq_i) \in 
    \mathcal{Z}(W_i^-, W_i^+,t_i) \biggr\}~.
\end{equation}
For notational simplicity, we denote a dynamic control by  $\mathcal{P}$,  and
an admissible control $\mathcal{P} \in \mathcal{A}$ can be written as
\begin{equation}
  \mathcal{P} = \{(\qq_i(\cdot), \pp_i(\cdot)) ~:~ i=0, \ldots, M \} ~.
\end{equation}
We also define $\mathcal{P}_n \equiv \mathcal{P}_{t_n} \subset
\mathcal{P}$ as the tail of the set of controls in $[t_n, t_{n+1},
\ldots, t_{M}]$, i.e.\
\begin{equation}
  \mathcal{P}_n =\{(\qq_n(\cdot), \pp_n(\cdot)), \ldots, 
                   (\qq_{M}(\cdot), \pp_{M}(\cdot)) \} ~.
\end{equation}
For notational completeness, we also define the tail of the admissible
control set $\mathcal{A}_n$ as
\begin{equation}
  \mathcal{A}_n = \biggl\{
    (\qq_i, \pp_i)_{n \leq i \leq M} : 
    (\qq_i, \pp_i) \in \mathcal{Z}(W_i^-, W_i^+,t_i) \biggr\}~,
\end{equation}
so that $\mathcal{P}_n \in \mathcal{A}_n$.

\section{Risk and reward}
\subsection{Reward}
Define $E_{\mathcal{P}_0}^{\SX_0^-,t_0^-} [ \cdot ]$
as the expectation  conditional on the observation at time $t_0^-$, state $\SX_0^-$, under
control $\mathcal{P}_{0}$.  We then define reward as
\begin{eqnarray}
    {\text{ EW }} (\SX_0^-, t_0^-)  & = &  E_{\mathcal{P}_{0}}^{\SX_0^-,t_{0}^-} 
                \biggl[ 
                    \sum_{i=0}^{M} \qq_i 
                 \biggr]
\end{eqnarray}
which is the total expected withdrawals in $[0,T]$.
We will use EW as the reward measure in all cases.
Note that $\qq_i$ is inflation adjusted and that we do not discount the future cash flows.  We view this
as a conservative approach and is consistent with the \citet{Bengen1994} scenario.

\subsection{Risk}

\begin{description}

 \item[PS] We define PS risk as the probability of shortfall w.r.t. a terminal wealth level $\XL$,
          \begin{eqnarray}
  {\text{ PS}} (\SX_0^-, t_0^-)   = Prob[ W_T < 
     \XL] = E_{\mathcal{P}_0}^{\SX_0^-,t_0^-}[ {\bf{1}}_{ W_T < \XL } ]
      ~.
     \end{eqnarray}
        Usually, $\XL$ is zero,
        i.e., we are concerned with running out of cash.
         We want to {\em{minimize}} PS risk.

  \item[LS] Linear shortfall
      \begin{eqnarray}
       {\text{LS}}(\SX_0^-, t_0^-) & = & 
             E_{\mathcal{P}_{0}}^{\SX_0^-,t_{0}^-}[ \min (W_T -\XL, 0) ]~.
       \end{eqnarray}
     Note that PS risk does not differentiate bad outcomes.  Clearly,
     being short $1\$ $ is not as bad as being short $ 1000\$ $.
     LS weights the bad outcomes.
     Since ES is defined in terms of final wealth,
    not losses, we want to {\em maximize} LS risk measure.

 \item[ES]
   ES  is the mean of the worst $\alpha$ fraction of
            outcomes.  A common choice is $\alpha = .05$.
             More precisely, let $W_T$ be the wealth associated with ${\mathcal{P}_{0}}^{\SX_0^-,t_{0}^-} $
             \begin{align}
          {\text{ES}}(\SX_0^-, t_0^-) & = E_{\mathcal{P}_{0}}^{\SX_0^-,t_{0}^-} 
                       \biggl[ \frac{ W_T  {\bf{1}}_{W_T < \XL} }{\alpha} \biggr]  
                         \nonumber \\
             & \text{ subject to } 
                    \begin{cases}
                        E_{\mathcal{P}_{0}}^{\SX_0^-,t_{0}^-} [ {\bf{1}}_{W_T < \XL}]  =  \alpha.  \label{def:ES} \\
                    \end{cases}
             \end{align}
            We want to {\em maximize} ES  risk measure.

\end{description}
One of the main goals of this paper is to compare and contrast these different reward-risk combinations, both mathematically and computationally.

\subsection{Summary of Acronyms}
For future reference, Table \ref{acro_def} lists
the acronyms  used in this paper.

\begin{table}[tb]
\begin{center}
{\small
\begin{tabular}{lc} \toprule
Acronym          & Description \\ \midrule 
 EW (expected withdrawals)   &  $E[ \sum_{i=0}^{M} \qq_i]$\\
 PS (probability of shortfall) &  $E[{\bf{1}}_{ W_T < \XL }]$\\
 LS (linear shortfall) &       $E[\min (W_T -\XL, 0)]$\\
 ES (expected shortfall )      & $ E \biggl[ \frac{{ W_T \bf{1}}_{W_T < \XL} }{\alpha} \biggr]$\\
                   & s.t. $E[{\bf{1}}_{W_T < \XL}] = \alpha $\\
\bottomrule
\end{tabular}
}
\end{center}
\caption{Definition of acronyms.
\label{acro_def}
}
\end{table}

\section{Pareto points} 
We will use a scalarization technique to determine Pareto optimal
points for the multi-objective problems balancing risk and reward.
As an example consider problem EW-PS.  Informally, given an scalarization parameter $\kappa  >0$, we seek the
optimal control $\mathcal{P}_0$ that maximizes
\begin{eqnarray}
      \text{EW}( \SX_0^-, t_0^-) - \kappa~\text{PS}( \SX_0^-, t_0^-)~.
\end{eqnarray}
Varying $\kappa$ traces out an efficient frontier in the (EW, PS) plane.  For any fixed
value of PS, the corresponding point on the efficient frontier is the largest possible
value of EW.

\subsection{PS, LS}\label{PS_QS_LS_prob}
We solve optimal control problem for weighted reward and risk combinations,  e.g., EW-PS, EW-LS.
 To be precise, for each reward and risk parameter pair, we define the function $G(W_T, \XL)$ below,
\begin{eqnarray}
     \text{PS}&: &  G_{PS}(W_T, \XL)  =   - {\bf{1}}_{ W_T < \XL } \\
      \text{LS} & : &  G_{LS}(W_T, \XL)  =  \min (W_T -\XL, 0) ~,
\end{eqnarray}
where $\XL$ is a specified wealth level. 
Assuming a risk aversion scaling parameter $\kappa$,  the general problem for EW-xS, ($x = \{\text{P,L}\}$)  can be written as
\begin{align}
{\text{EW-xS}}_{t_0}\left(\XL, \kappa \right):
  \: 
  & \sup_{\mathcal{P}_{0}\in\mathcal{A}} 
     \Biggl\{E_{\mathcal{P}_{0}}^{\SX_0^+,t_{0}^+}
      \Biggl[
         \sum_{i=0}^{M} \qq_i +
     \kappa G_{\text{xS}}(W_T, \XL)  
     \Biggr. \Biggr. \nonumber \\
  &  \Biggl. \biggl.  \quad +~\epsilon W_T
     \bigg\vert \SX(t_0^-) = (s,b) ~\Biggr] \Biggr\}\label{PCES_a}\\
\text{ subject to } &
\begin{cases}
  (S_t, B_t) \text{ follow processes \eqref{jump_process_stock} and
                    \eqref{jump_process_bond}};
                     ~~t \notin \mathcal{T} \\
  W_{\ell}^+ = W_{\ell}^{-} - \qq_\ell \,;
              ~\SX_\ell^+ = (S_\ell^+ , B_\ell^+) \\
  W_{\ell}^- = S(t^-_i) + B(t_i^-) 
               \\
  S_\ell^+ = \pp_\ell(\cdot) W_\ell^+ \,;
             ~B_\ell^+ = (1 - \pp_\ell(\cdot) ) W_\ell^+ \, \\
  (\qq_\ell(\cdot) , \pp_\ell(\cdot)) \in
    \mathcal{Z}(W_\ell^-, W_\ell^+,t_\ell) \\
  \ell = 0, \ldots, M ~;~ t_\ell \in \mathcal{T} \\
\end{cases}~.
\label{PCES_b}
\end{align}

Observe that we have added the stabilization term $\epsilon W_T$ to the objective
function in equation (\ref{PCES_a}).  The control problem is ill-posed in
the cases where $t \rightarrow T, W_t \gg \XL$.  In this case, due to the maximum
withdrawal constraint, and since the $Prob[W_t < \XL] \simeq 0$, then the control
has almost no effect on the objective function.  Addition of the stabilization
term regularizes the problem (see e.g. 
\citet{Chen_Mohib_2023}).  We will discuss this further in later sections.

\subsection{Expected Shortfall (ES)}\label{ES_prob}
We are interested in the relationship between the
above reward-risk formulations with the same reward but ES risk, i.e.,
\begin{eqnarray}
   \text{EW}( \SX_0^-, t_0^-) + \kappa~\text{ES}(\SX_0^-, t_0^-)~.
\end{eqnarray}
 We formulate  the EW-ES optimal control problem using the technique 
in \citet{Uryasev_2000}, more precisely ($0 <\alpha <1$)
\begin{align}
{\text{EW-ES}}_{t_0}\left(\alpha, \kappa \right):
  \:
        &    \sup_{\mathcal{P}_{0}\in\mathcal{A}}
                \Biggl\{
                 E_{\mathcal{P}_{0}}^{\SX_0^-,t_{0}^-}
             \Biggl[ ~\sum_{i=0}^{M} q_i ~  + ~
                \kappa \sup_{\W}\biggl( \W + \frac{1}{\alpha} \min (W_T -\W, 0) \biggr) 
                \Biggr. \Biggr. \nonumber \\
        &  ~~~~ \Biggl. \Biggl.
               + \epsilon W_T
                  \bigg\vert  (s,b)
                     ~\Biggr] \Biggr\} ~ \label{EW_ES_0} \\
\text{ subject to } &
\begin{cases}
   {\text{Conditions~}} (\ref{PCES_b})
\end{cases}~.
    \nonumber
\end{align}

Interchanging the order in $sup~sup\{\cdot\} $ in problem (\ref{EW_ES_0} ), we equivalently have
\begin{align}
{\text{EW-ES}}_{t_0}\left(\alpha, \kappa \right):
  \:         & \sup_{\W}   \sup_{\mathcal{P}_{0}\in\mathcal{A}}
                \Biggl\{
                 E_{\mathcal{P}_{0}}^{\SX_0^-,t_{0}^-}
             \Biggl[ ~\sum_{i=0}^{M} q_i ~  + ~
                \kappa \biggl( \W + \frac{1}{\alpha} \min (W_T -\W, 0) \biggr)
                \Biggr. \Biggr. \nonumber \\
        &  ~~~~ \Biggl. \Biggl.
               + \epsilon W_T
                  \bigg\vert \SX_0^-= (s,b)
                     ~\Biggr] \Biggr\} ~ \label{EW_ES_1} \\
\text{ subject to } &
\begin{cases}
   {\text{Conditions~}} (\ref{PCES_b})
\end{cases}~.
    \nonumber
\end{align}
Note that, as for the EW-xS problems, we have added a 
stabilization term $\epsilon W_T$ to the objective function.

\begin{remark}[Pre-commitment policy]
Note that the optimal control for problem (\ref{EW_ES_1}) is formally
a pre-commitment policy \citep{forsyth_2019_c}.  We delay further
discussion concerning this issue to Section \ref{subsec:EWEStoEWLS}.
\end{remark}

\subsection{Properties of optimal solution of EW-ES formulation  (\ref{EW_ES_1})}

 Let $\mathcal{P}_0$ be any permissible control for problem (\ref{EW_ES_1}) and 
 $W_T$ be the wealth corresponding to $\mathcal{P}_0$.
Consider the maximizer below:\footnote{The $\argmax$ is well defined since
$\sup_{\mathcal{P}} \{ \cdot \}$ is
a continuous function of $W^{\prime}$.}

\begin{eqnarray}
    \XL & = & \argmax_{W^{\prime}}
                \Biggl\{
                 E_{\mathcal{P}_{0}}^{\SX_0^-,t_{0}^-}
             \Biggl[ ~\sum_{i=0}^{M} q_i ~  + ~
                \kappa \biggl( \W + \frac{1}{\alpha} \min (W_T -\W, 0) \biggr) 
                \Biggr. \Biggr. \nonumber \\
        & & ~~~~ \Biggl. \Biggl.
               + \epsilon W_T
                  \bigg\vert  \SX_0^-=(s,b)
                     ~\Biggr] \Biggr\} ~ .
           \label{W_star_00}
\end{eqnarray}
Following  \cite{Uryasev_2000},  it can be shown that \eqref{W_star_00}  is equivalent to the probability constraint below, under the assumption of continuity in distribution of $W_T$,
\begin{eqnarray}
      E_{\mathcal{P}_{0}}^{\SX_0^-,t_{0}^-} [ {\bf{1}}_{W_T< \XL}] & = & \alpha  \label{PCEE_props_1}
      ~.
\end{eqnarray}
Let $E_{\mathcal{P}_{0}}$ denote $E_{\mathcal{P}_{0}}^{\SX_0^-,t_{0}^-}$ for notational simplicity. Consider
\begin{eqnarray}
     & & E_{\mathcal{P}_{0}}  \biggl( \XL + \frac{1}{\alpha} \min (W_T -\XL, 0) \biggr) \nonumber \\
   & & {\text{subject to }} \nonumber \\
   & & ~~~~~~~~~ \begin{cases}
                   E_{\mathcal{P}_{0}}[ {\bf{1}}_{W_T \leq \XL}] = \alpha
                 \end{cases} 
               ~.
                \label{check_1} 
\end{eqnarray}
Let $g_{\mathcal{P}_{0}}(W_T)$ be the density of $W_T$ under control $\mathcal{P}_{0}$.  Then,
write equation (\ref{check_1}) as
\begin{eqnarray}
    & & \int_{-\infty}^{+\infty} \XL~ g_{\mathcal{P}_{0}}(W_T) ~dW_T
     + \frac{1}{\alpha} \int_{-\infty}^{\XL}  (W_T - \XL)~g_{\mathcal{P}_{0}}(W_T) ~dW_T \label{check_3}
               \\
    & &  \text{subject to } \nonumber \\
   & & ~~~~~~~~~ \begin{cases}
                   \int_{-\infty}^{\XL} g_{\mathcal{P}_{0}}(W_T) ~dW_T = \alpha
                \end{cases}  ~ . \label{check_4}
\end{eqnarray}
We can write (\ref{check_3}) as
\begin{eqnarray}
       \XL ~\int_{-\infty}^{+\infty}~ g_{\mathcal{P}_{0}}(W_T) ~dW_T
          -\frac{\XL}{\alpha} \int_{-\infty}^{\XL} G_{\mathcal{P}_{0}}(W_T) ~dW_T 
        + \frac{1}{\alpha} \int_{-\infty}^{\XL} W_T~ g_{\mathcal{P}_{0}}(W_T) ~dW_T
        ~.
               \label{check_5}
\end{eqnarray}
Using equation (\ref{check_4}), this becomes
\begin{eqnarray}
    \XL - \XL + \frac{1}{\alpha} \int_{-\infty}^{\XL} W_T~ g_{\mathcal{P}_{0}}(W_T) ~dW_T 
    & = & E_{\mathcal{P}_{0}}^{\SX_0^-,t_{0}^-} \biggl[ \frac{{W_T \bf{1}}_{W_T < \XL }}{\alpha} \biggr]
  ~.
\end{eqnarray}
Thus,  when \eqref{W_star_00}  is satisfied,  we have
\begin{eqnarray}
       E_{\mathcal{P}_{0}}^{\SX_0^-,t_{0}^-} [ {\bf{1}}_{W_T< \XL}] & = & \alpha  \nonumber \\
      E_{\mathcal{P}_{0}}^{\SX_0^-,t_{0}^-} \biggl[ \frac{{W_T \bf{1}}_{W_T < \XL }}{\alpha} \biggr] & = & 
               E_{\mathcal{P}_{0}}^{\SX_0^-,t_{0}^-} \biggl[ \biggl( \XL + \frac{1}{\alpha} 
                             \min (W_T -\XL, 0) \biggr) \biggr] 
           ~.   \label{equiv_1}
    \end{eqnarray}
Consider the optimal $\XL^*$ and control  $\mathcal{P}_0^*$  from $\EWES(\alpha,\kappa)$,
equation (\ref{EW_ES_1}), i.e.,
\begin{eqnarray}
    \XL^* & = & \argmax_{W^{\prime}}
      \sup_{\mathcal{P}_{0}\in\mathcal{A}}
                \Biggl\{
                 E_{\mathcal{P}_{0}}^{\SX_0^-,t_{0}^-}
             \Biggl[ ~\sum_{i=0}^{M} q_i ~  + ~
                \kappa \biggl( \W + \frac{1}{\alpha} \min (W_T -\W, 0) \biggr)
                \Biggr. \Biggr. \nonumber \\
        &   &~~~~ \Biggl. \Biggl.
               + \epsilon W_T
                  \bigg\vert \SX_0^-= (s,b)
                     ~\Biggr] \Biggr\}  ~,
           \label{W_star_1}
\end{eqnarray}
then equation (\ref{equiv_1}) implies
\begin{eqnarray}
    Prob[ W_T^* < \XL^*] & = & \alpha   \label{PCEE_props_2} \\
    ES &= &{\text{ mean of worst $\alpha$ fraction of outcomes}}  \nonumber \\
       & = & 
               E_{\mathcal{P}^*_{0}}^{\SX_0^-,t_{0}^-} \biggl[ \biggl( \XL^* + \frac{1}{\alpha} 
                             \min (W_T^* -\XL^*, 0) \biggr) \biggr] ~. \nonumber
\end{eqnarray}
From \eqref{PCEE_props_2},  we see immediately that $\XL^* $ is the $\alpha$-VaR (value at risk) of the terminal wealth $W_T^*$ associated with the optimal control.

Fixing any target wealth level $\XL$, we  consider linear 
shortfall Pareto optimization ($\hat{\kappa} >0$):
\begin{eqnarray}
\text{EW-LS}_{t_0}\left(\XL, \widehat{\kappa} \right):
    \qquad &&
     \sup_{\mathcal{P}_{0}\in\mathcal{A}}
        \Biggl\{
               E_{\mathcal{P}_{0}}^{\SX_0^-,t_{0}^-}
           \Biggl[ ~\sum_{i=0}^{M} q_i ~  + ~
               \hat{\kappa} ~\min (W_T - \XL, 0)
             + \epsilon W_T
                    \Biggr. \Biggr. \nonumber \\
         & &~~~~~~~~~~~~~~~~~~~~~~  \Biggl. \Biggl. ~~~~~
                \bigg\vert \SX(t_0^-) = (s,b)
                   ~\Biggr] \Biggr\} ~,~\label{EWLS_a}\\
                   \text{ subject to } &&
\begin{cases}
   {\text{Conditions~}} (\ref{PCES_b})
\end{cases}~.
    \nonumber
\end{eqnarray}
Note that we notationally distinguish  risk aversion parameters for EW-ES and EW-LS to describe their precise connection.
We summarize the relationship
between EW-ES and EW-LS in Proposition \ref{prop_time} (see also \citet{forsyth_2019_c}).

\begin{proposition}[Optimal EW-ES strategy solves EW-LS] \label{prop_time} 

\begin{itemize}
  \item[] 

 \item[(i)] The pre-commitment  strategy  $\mathcal{P}^*$  which solves $\EWES(\alpha,{\kappa})$
 \eqref{EW_ES_0} is a solution to $\EWLS (\XL,\frac{\kappa}{\alpha})$ \eqref{EWLS_a} with the fixed  wealth level
$\XL=\XL^*$ defined in \eqref{W_star_1}. 

\item[(ii)] Conversely, 
an optimal control for $\EWLS (\XL,\frac{\kappa}{\alpha})$ \eqref{EWLS_a}
with the fixed  wealth level
$\XL=\XL^*$ given by \eqref{W_star_1}, solves  $\EWES(\alpha,{\kappa})$ \eqref{EW_ES_0}.
\end{itemize}

\end{proposition}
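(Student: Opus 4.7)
The plan is to exploit the already-performed interchange of the two suprema in going from \eqref{EW_ES_0} to \eqref{EW_ES_1}, together with the trivial observation that, for any fixed value of the auxiliary variable $\W$, the term $\kappa \W$ in the EW-ES integrand is a constant with respect to the control $\mathcal{P}_0$. Consequently, the inner problem
\begin{equation*}
  \sup_{\mathcal{P}_0 \in \mathcal{A}}
    E_{\mathcal{P}_0}^{\SX_0^-,t_0^-}\!\Bigl[\sum_{i=0}^M q_i + \kappa \W + \tfrac{\kappa}{\alpha}\min(W_T-\W,0) + \epsilon W_T\Bigr]
\end{equation*}
has exactly the same set of maximizers in $\mathcal{P}_0$ as the EW-LS$(\W,\kappa/\alpha)$ problem \eqref{EWLS_a}, since the two objectives differ only by the $\mathcal{P}_0$-independent constant $\kappa\W$. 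With this single identification in hand, the proposition reduces to a bookkeeping argument about which variable is being optimized first, and the identification of $\W$ with $\XL^*$.

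For part (i), let $(\mathcal{P}^*,\XL^*)$ denote the joint maximizer of \eqref{EW_ES_1} as given by \eqref{W_star_1}. Fixing $\W=\XL^*$ in the outer sup, I would argue that the inner sup over $\mathcal{P}_0$ must be attained at $\mathcal{P}^*$: if some $\widetilde{\mathcal{P}}$ gave a strictly larger inner value at $\W=\XL^*$, then the pair $(\widetilde{\mathcal{P}},\XL^*)$ would beat the joint optimum, a contradiction. By the additive-constant observation above, $\mathcal{P}^*$ is then a maximizer of $\EWLS(\XL^*,\kappa/\alpha)$. For part (ii), let $\mathcal{P}^{**}$ be any optimizer of $\EWLS(\XL^*,\kappa/\alpha)$ and let $J(\W,\mathcal{P}_0)$ denote the inner objective of \eqref{EW_ES_1}. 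Adding back the constant $\kappa\XL^*$ to the EW-LS objective gives $J(\XL^*,\mathcal{P}^{**}) = J(\XL^*,\mathcal{P}^*) = V^*$, the global optimum of \eqref{EW_ES_1}. Then the EW-ES value of $\mathcal{P}^{**}$, namely $\sup_{\W} J(\W,\mathcal{P}^{**})$, is bounded below by $J(\XL^*,\mathcal{P}^{**})=V^*$ and bounded above by $\sup_{\W}\sup_{\mathcal{P}_0} J = V^*$, so $\mathcal{P}^{**}$ attains the EW-ES optimum \eqref{EW_ES_0}.

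The hard part is essentially conceptual rather than computational: I need to be careful to distinguish three senses of ``optimality'' in part (ii) — optimality for EW-LS at the fixed target $\XL^*$, optimality for the inner sup of \eqref{EW_ES_1} at fixed $\W=\XL^*$, and optimality for EW-ES proper, i.e.\ \eqref{EW_ES_0}. The middle-to-outer step uses only the sandwich inequality above, but it silently relies on the already-established facts from Section~3 that $\XL^*$ defined via \eqref{W_star_1} coincides with the $\alpha$-VaR of $W_T^*$ \eqref{PCEE_props_2}, and on the continuity-in-distribution assumption noted after \eqref{W_star_00} which guarantees that the $\argmax$ in the definition of $\XL^*$ is well defined. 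No other delicate analytic work is needed; the proposition is essentially an exercise in swapping suprema combined with shifting by a constant.
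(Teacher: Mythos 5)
Your argument is correct and follows essentially the same route as the paper's own proof: both rest on the interchange of the two suprema relating \eqref{EW_ES_0} and \eqref{EW_ES_1}, the observation that at fixed $\W=\XL^*$ the inner objective differs from the $\EWLS(\XL^*,\kappa/\alpha)$ objective only by the control-independent constant $\kappa\XL^*$, and (for the converse) the fact that the inner optimum at $\W=\XL^*$ already attains the global optimum. Your write-up simply makes explicit the sandwich inequality and the reliance on \eqref{PCEE_props_2} that the paper leaves implicit.
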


\begin{proof} 
\begin{itemize}
 \item[]

\item[(i)]
Let  $\mathcal{P}_0^*$ solve (\ref{EW_ES_0}). Then it solves \eqref{EW_ES_1} due to equivalence between  (\ref{EW_ES_1}) and  \eqref{EW_ES_0}.
Consequently $\mathcal{P}_0^*$ also  solves the linear shortfall problem
$\EWLS (\XL^*,\frac{\kappa}{\alpha})$ 
in \eqref{EWLS_a},  
i.e., $\mathcal{P}_0^*$  solves
\begin{align}
{\text{EW-LS}}_{t_0}\left(\XL, \kappal \right):
  \:      \quad \quad   &   \sup_{\mathcal{P}_{0}\in\mathcal{A}}
                \Biggl\{
                 E_{\mathcal{P}_{0}}^{\SX_0^-,t_{0}^-}
             \Biggl[ ~\sum_{i=0}^{M} q_i ~  + ~
              \kappal  \min (W_T -\XL ,0) 
                \Biggr. \Biggr. \nonumber \\
        &  ~~~~ \Biggl. \Biggl.
               + \epsilon W_T
                  \bigg\vert  (s,b)
                     ~\Biggr] \Biggr\} ~ \nonumber\\
\text{ subject to } &
\begin{cases}
   {\text{Conditions~}} (\ref{PCES_b})
\end{cases}~,
    \nonumber
\end{align}
with $\kappal= \frac{\kappa}{\alpha}$,
$\XL=\XL^*$,  and $\XL^*$ defined in \eqref{W_star_1}.

\item[(ii)] Assume  that $\mathcal{P}_0^*$  solves  $\EWLS(\XL^*, \kappal)$, \eqref{EWLS_a}, with $\kappal= \frac{\kappa}{\alpha}$ and $\XL^*$ defined in \eqref{W_star_1}. 
Then  $\mathcal{P}_0^*$  solves
\begin{align}
   \quad \quad   &   \sup_{\mathcal{P}_{0}\in\mathcal{A}}
                \Biggl\{
                 E_{\mathcal{P}_{0}}^{\SX_0^-,t_{0}^-}
             \Biggl[ ~\sum_{i=0}^{M} q_i ~  + ~
              \kappa ( \XL^* + \frac{1}{\alpha}  \min (W_T -\XL^* ,0) 
                \Biggr. \Biggr. \nonumber \\
        &  ~~~~ \Biggl. \Biggl.
               + \epsilon W_T
                  \bigg\vert  (s,b)
                     ~\Biggr] \Biggr\} ~ \nonumber\\
\text{ subject to } &
\begin{cases}
   {\text{Conditions~}} (\ref{PCES_b})
\end{cases}~,
    \nonumber
\end{align}
Applying $\XL^*$ defined in \eqref{W_star_1},   then $\mathcal{P}_0^*$  solves \eqref{EW_ES_1}, and hence \eqref{EW_ES_0}.
\end{itemize}
\end{proof}
{\myblue{
Let
\begin{eqnarray}
  \mathcal{D}_{ES} & = &  \{ (\alpha , \kappa) ~|~ 0 < \alpha < 1 ~;~ \kappa > 0 \}
                 \nonumber \\
  \mathcal{D}_{LS} &=& \{ (\mathbb{W}, \kappa) ~|~ \mathbb{W} \in \mathbb{R} ~;~ \kappa > 0 \} \label{DESLS}
         ~.
\end{eqnarray}
Define
\begin{equation}\label{optSet}
\begin{array}{l}
\mathcal{H}_\ES^*=\{  \mathcal{P}_0^*: ~\mathcal{P}_0^*~\text{solves} ~
        \EWES(\alpha, \kappa) \eqref{EW_ES_0} \text{ for some }
            (\alpha,\kappa)  \in \mathcal{D}_{ES} \}\\
\mathcal{H}_\LS^*=\{  \mathcal{P}_0^*:~\mathcal{P}_0^*~\text{solves} ~
    \EWLS(\XL, \kappal)  \eqref{EWLS_a} \text{ for some }
            (\XL,\kappal)  \in \mathcal{D}_{LS} \} ~.
\end{array}
\end{equation}
}
}
We then have the following Corollary, which follows from Proposition \ref{prop_time}:
\begin{corollary}\label{corollary_1} Let  $\mathcal{H}_{\ES}^*$ 
and $\mathcal{H}^*_{\LS}$ be defined in \eqref{optSet}.
Then the set  $\mathcal{H}_{\ES}^*$ of optimal controls 
for Problem $\EWES$
is a subset of the set   $\mathcal{H}^*_{\LS}$ 
of optimal controls for Problem $\EWLS$.
\end{corollary}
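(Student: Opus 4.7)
The plan is to obtain this corollary as essentially a direct repackaging of Proposition \ref{prop_time}(i), the only real work being to check that the parameter pair produced by that proposition actually lies in the admissible set $\mathcal{D}_{LS}$ used to define $\mathcal{H}^*_{\LS}$.

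First I would take an arbitrary element $\mathcal{P}_0^* \in \mathcal{H}_{\ES}^*$. By the definition of $\mathcal{H}_{\ES}^*$ in \eqref{optSet}, there exist parameters $(\alpha,\kappa) \in \mathcal{D}_{ES}$, i.e.\ with $0 < \alpha < 1$ and $\kappa > 0$, such that $\mathcal{P}_0^*$ solves $\EWES(\alpha,\kappa)$ as defined in \eqref{EW_ES_0}. Next I would invoke Proposition \ref{prop_time}(i): the solution $\mathcal{P}_0^*$ of $\EWES(\alpha,\kappa)$ also solves $\EWLS(\XL^*, \kappa/\alpha)$ as defined in \eqref{EWLS_a}, where $\XL^*$ is the maximizer defined by \eqref{W_star_1}.

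To conclude $\mathcal{P}_0^* \in \mathcal{H}^*_{\LS}$, I would then verify that the resulting parameter pair $(\XL^*, \kappa/\alpha)$ belongs to $\mathcal{D}_{LS}$ as given in \eqref{DESLS}. The second coordinate $\kappa/\alpha$ is strictly positive since both $\kappa > 0$ and $\alpha > 0$. For the first coordinate, the footnote attached to \eqref{W_star_00} guarantees that the $\argmax$ is well defined, so $\XL^* \in \mathbb{R}$; this is also consistent with the identification of $\XL^*$ as the $\alpha$-VaR of $W_T^*$ noted immediately after \eqref{PCEE_props_2}. Hence $(\XL^*, \kappa/\alpha) \in \mathcal{D}_{LS}$, which by the definition of $\mathcal{H}^*_{\LS}$ yields $\mathcal{P}_0^* \in \mathcal{H}^*_{\LS}$. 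Since $\mathcal{P}_0^*$ was arbitrary, this establishes $\mathcal{H}_{\ES}^* \subseteq \mathcal{H}^*_{\LS}$.

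There is essentially no hard step here: Proposition \ref{prop_time}(i) carries all the analytic content (the Rockafellar--Uryasev reduction and the interchange of suprema leading to the linear-shortfall form). The only point requiring even mild care is the bookkeeping on parameters, i.e.\ confirming that the $\XL^*$ produced by the proposition is a genuine real number and that the rescaled risk-aversion $\kappa/\alpha$ stays strictly positive, so that the witnessing pair $(\XL^*, \kappa/\alpha)$ sits inside the parameter set $\mathcal{D}_{LS}$ over which $\mathcal{H}^*_{\LS}$ is defined. I would keep the write-up to a short paragraph that takes an arbitrary $\mathcal{P}_0^* \in \mathcal{H}_{\ES}^*$, quotes Proposition \ref{prop_time}(i), and checks the parameter membership.
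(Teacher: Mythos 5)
Your proposal is correct and matches the paper's route exactly: the paper offers no separate proof of Corollary \ref{corollary_1}, simply asserting that it follows from Proposition \ref{prop_time}(i), which is precisely the argument you give. Your additional bookkeeping step---verifying that $(\XL^*,\kappa/\alpha)$ lies in $\mathcal{D}_{LS}$ because $\kappa/\alpha>0$ and $\XL^*\in\mathbb{R}$---is a sound and worthwhile detail the paper leaves implicit.
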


\subsection{Time inconsistent EW-ES and time consistent EW-LS}\label{subsec:EWEStoEWLS}

While Proposition \ref{prop_time} indicates that $\EWES$ and $\EWLS$ share a 
common solution when the wealth level $\XL=\XL^*$, 
these two dynamic optimization formulations have 
different properties in terms of time consistency.
To see this,  we first recall  the concept of time consistency and relate its relevance 
to the $\EWES(\alpha,\kappa)$ problem, (\ref{EW_ES_1}).

Consider the optimal control $\mathcal{P}_0^*=  (\mathcal{P}^*)^{t_0}$ computed at $t_0$ from problem (\ref{EW_ES_1}) at all rebalancing times, 

\begin{eqnarray} \label{opt_P_t0}
    (\mathcal{P}^*)^{t_0}(\SX(t_i^-), t_i)  ~, ~ i=0, \ldots, M ~,
\end{eqnarray}
i.e., \eqref{opt_P_t0} denotes the optimal control  $(\mathcal{P}^*)^{t_0}$  at any time $t_i \geq t_0$, as 
a function of the state variables $\SX(t)$.

Next we solve the problem (\ref{EW_ES_1})  starting 
at a later time $t_k, k > 0$ and denote the  optimal control starting at $t_k$ is denoted by:
\begin{eqnarray} 
    (\mathcal{P}^*)^{t_k}(\SX(t_i^-), t_i)  ~, ~ i=k, \ldots, M  ~.
\end{eqnarray}
In general, the solution of (\ref{EW_ES_1}) 
computed at $t_k$ is not equivalent to the solution computed at $t_0$:
\begin{eqnarray} 
    (\mathcal{P}^*)^{t_k}(\SX(t_i^-), t_i)  
       \neq  (\mathcal{P}^*)^{t_0}(\SX(t_i^-), t_i)  ~;~ i \ge k > 0.
\end{eqnarray}
This non-equivalence makes problem (\ref{EW_ES_1}) \emph{time inconsistent}, implying that
 the optimal control computed at  $t_k$, $k>0$, deviates from the control determined at time $t_0$. 
 The optimal control  $\mathcal{P}_0^*=  (\mathcal{P}^*)^{t_0}$ determined at the initial time 
is considered a {\em pre-commitment}
control since the investor would need 
to commit to following the strategy at all times following $t_0$, 
even if the optimal control recomputed at future time becomes different.
Some authors describe pre-commitment controls as non-implementable because of the incentive to deviate
from the initial control.

Following Proposition \ref{prop_time},  the pre-commitment control for $\EWES$ (\ref{EW_ES_1}),  
fortunately,  can be shown
to be optimal for  $\EWLS(\XL,\kappal)$, for which $\XL$ is fixed at the
optimal value  (at time zero) in \eqref{W_star_1}.

With a fixed $\XL$, ${\text{EW-LS}}_{t_0}(\XL,\kappa / \alpha)$
uses a target-based linear shortfall as its measure of risk,
and $\EWLS$  is trivially time consistent. 
Furthermore,
$\XL$ has the convenient interpretation of a disaster level of final wealth,
as specified at time zero.

While the pre-commitment strategy $\mathcal{P}^*$ from $\EWES(\alpha,{\kappa})$,  (\ref{EW_ES_1}), is time inconsistent when viewed as a solution to  EW-ES, this strategy is time consistent with respect  to 
 $\EWLS(\XL^*,\frac{\kappa}{\alpha}$) with the fixed wealth level $\XL^*$.  
In other words,  conditional on information at $t_n$ and fixed $\XL^*$, 
the future decision  $\{(\mathcal{P}^*)^{t_n}(\SX(t_i^-), t_i)  ~;~ i=n, \ldots, M \} $  
 of the
  optimal  pre-commitment EW-ES  control, computed at $t_0$, 
solves
\begin{eqnarray}
 \text{EW-LS}_{t_n}\left(\XL^*,\kappa / \alpha  \right):
    &  &
            \sup_{\mathcal{P}_{n}\in\mathcal{A}}
        \Biggl\{
               E_{\mathcal{P}_{n}}^{\SX_n^-,t_{n}^-}
           \Biggl[ ~\sum_{i=n}^{M} q_i ~  + ~
               \frac{\kappa}{\alpha} \min (W_T - \XL^*,0) 
                    \Biggr. \Biggr.   \label{timec_equiv}
\\
        & & ~~~~ \Biggl. \Biggl.
               + \epsilon W_T
                \bigg\vert \SX(t_n^-)= (s,b)
                   ~\Biggr] \Biggr\},
                   \nonumber 
                  \end{eqnarray}
for any given permissible stock and bond value pair $(s,b)$.

\begin{remark}[EW-ES $\rightarrow$ EW-LS]
Proposition \ref{prop_time} essentially tells us that any optimal control
$\mathcal{P}^*$ from EW-ES problem (\ref{EW_ES_1}), solves
some EW-LS problem (\ref{timec_equiv}) with a fixed wealth level $\XL^*$.
Since  EW-LS is time consistent,  the 
EW-ES optimal control  $\mathcal{P}^*$ is time consistent when 
Pareto optimality is assessed with EW-LS with this fixed wealth level $\XL^*$.
\end{remark}

Since the optimal control   $\mathcal{P}^*$  for $\text{EW-ES}_{t_0}(\alpha, \kappa)$  
solves $\text{EW-LS}_{t_n}(\XL^*, \kappa / \alpha)$ at any $t_n$, where $\XL^*$ is the 
$\alpha$-VaR of the conditional terminal
wealth $W_T^*$, conditional on $W_0^*=s+b$, we  can regard  $\mathcal{P}^*$ as an induced time consistent strategy for $\text{EW-LS}_{t_n}(\XL^*, \kappa / \alpha)$
 \citep{Strub_2019_a}. Consequently
the investor has no incentive to deviate from the induced time
consistent strategy, determined at time zero.  Hence this policy is implementable.

For more detailed analysis concerning the subtle distinctions involved in
pre-commitment, time consistent, and induced
time consistent strategies, please consult
\cite{Bjork2010,Bjork2014,vigna:2014,Vigna2017,
Strub_2019_a,forsyth_2019_c,bjork_book_2021}.

\subsection{Further relationship between EW-ES and  EW-LS problem} \label{sec:equivESLS}

Problem $\EWES(\alpha,\kappa)$ requires specification of the parameter pair $(\alpha,\kappa)$ 
while  problem $\EWLS(\XL,\kappal)$ needs stipulation of parameter pair $(\XL,\kappal)$. 
From Proposition \ref{prop_time} (ii), we learn that, given a value of $\XL$ from equation (\ref{W_star_1}) we can solve
problem Problem $\EWLS(\XL, \kappal)$, which generates a control which is an optimal control for problem
$\EWES(\alpha, \alpha \kappal)$.

However, given an arbitrary value of $\XL$,  for which a 
solution to Problem $\EWLS(\XL, \kappal)$ exists, what is the
relation of the optimal control for this problem to the 
optimal control for Problem $\EWES(\alpha,\kappa)$?

To connect an optimal EW-LS solution  $\mathcal{P}^*_0$  to $\EWES$, we 
define
\begin{equation}\label{alphaK}
\alpha_{\kappal}^*(\XL) = \text{prob}( 
              W_T^* < \XL),\quad  W_{T^*}  
          \text{ is the terminal wealth of } \mathcal{P}^*_0  
       \text{ which solves } \EWLS(\XL, \kappal)
\end{equation}

\begin{remark}[Construction of $\alpha_{\kappal}^*(\XL)$]
Given $(\XL, \kappal)$, and an optimal control $\mathcal{P}_0^* (\XL, \kappal)$ which solves Problem $\EWLS(\XL, \kappal)$, then
we can determine $\alpha_{\kappal}^*(\XL)$ from 
\begin{eqnarray}
    \alpha_{\kappal}^*(\XL)  & = & E_{\mathcal{P}_0^* (\XL, \kappal)}[ {\bf{1}}_{\{W_T*  < \XL\}}]~.
           \label{alpha_kappa_def}
\end{eqnarray}
\end{remark}

{\myred{
To ensure a proper correspondence to $\EWES$, we consider solution to  $\EWLS$ with $0<  \alpha_{\kappal}^*( \XL) <1$, i.e., we consider a restricted domain for $\EWLS$ as:
\begin{eqnarray}
 {\mathcal{D}}_{LS}^{+} & = & \{ (\XL, \kappal)~ | ~ ~ 0<  \alpha_{\kappal}^*( \XL) <1 ~ \text{ and }  \kappal > 0 
               \}
              ~. \label{D_LS_def}
\end{eqnarray}

\begin{assumption}[invertibility of $\alpha_{\kappal}^*(\XL)$] \label{as:inverse}
 The function $\alpha_{\kappal}^*(\XL)$  in \eqref{alphaK} is well defined and is invertible at $(\XL,\kappal )
             \in {{\mathcal{D}}^+_{LS}}$, i.e., 
for any $\XL^{\prime} \neq \XL,  (\XL^{\prime}, \kappal) \in {{\mathcal{D}}^+_{LS}},
             \alpha_{\kappal}^*(\XL^{\prime}) \neq \alpha_{\kappal}^*(\XL)$.
 \end{assumption}
 Note that here we only assume that, for each given $\XL$ and $\kappal$,  
$\EWLS(\XL, \kappal)$ yields a unique probability value $ \alpha_{\kappal}^*(\XL)$  
but  we do not assume uniqueness of {\myblue{the optimal controls}} 
for  $\EWLS(\XL, \kappal)$.

Proposition \ref{prop_ew_ls} below establishes an equivalence of $\EWES$  and $\EWLS$,
under the assumption that the function $\alpha^*_{\kappal}(\XL)$ is  well defined and invertible.

\begin{proposition}[Relationship  between $\EWLS$ and $\EWLS$ for  general  $\XL$]  \label{prop_ew_ls}
Suppose Assumption \ref{as:inverse} holds at ${(\XL, \kappal)} \in {{\mathcal{D}^+_{LS}}}$,
then a solution to $\EWLS(\XL, \kappal)$ is a solution to
 $\EWES(\alpha_{\kappal}^*(\XL), \alpha^*_{\kappal}(\XL)\kappal)$,
with $(\alpha_{\kappal}^*(\XL), \alpha^*_{\kappal}(\XL)\kappal) \in {\mathcal{D}}_{ES}$.
\end{proposition}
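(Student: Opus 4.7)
The plan is to reduce Proposition \ref{prop_ew_ls} to Proposition \ref{prop_time}(ii) by showing that, under Assumption \ref{as:inverse}, the given wealth level $\XL$ coincides with the optimal $\argmax$ threshold \eqref{W_star_1} of a suitably chosen $\EWES$ problem. Let $\mathcal{P}_0^*$ be any solution of $\EWLS(\XL, \kappal)$, and set $\alpha = \alpha^*_{\kappal}(\XL)$. Since $(\XL, \kappal) \in \mathcal{D}^+_{LS}$, we have $\alpha \in (0,1)$ and $\kappal > 0$, so the pair $(\alpha, \alpha\kappal)$ lies in $\mathcal{D}_{ES}$ and the candidate problem $\EWES(\alpha, \alpha\kappal)$ is well posed.

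The main step is the following chain of identifications. Consider $\EWES(\alpha, \alpha\kappal)$, let $\widetilde{\mathcal{P}}_0$ be one of its solutions, and let $\widetilde{\XL}$ be the associated $\argmax$ threshold from \eqref{W_star_1}. Applying Proposition \ref{prop_time}(i) with $(\alpha, \kappa) = (\alpha, \alpha\kappal)$, so that $\kappa/\alpha = \kappal$, the control $\widetilde{\mathcal{P}}_0$ also solves $\EWLS(\widetilde{\XL}, \kappal)$; moreover, by \eqref{PCEE_props_2} its terminal wealth $\widetilde{W}_T$ satisfies $Prob[\widetilde{W}_T < \widetilde{\XL}] = \alpha$. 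Hence, by the very definition \eqref{alphaK} of $\alpha^*_{\kappal}$, one has $\alpha^*_{\kappal}(\widetilde{\XL}) = \alpha = \alpha^*_{\kappal}(\XL)$. Invoking the invertibility of $\alpha^*_{\kappal}(\cdot)$ granted by Assumption \ref{as:inverse} then forces $\widetilde{\XL} = \XL$.

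With $\widetilde{\XL} = \XL$ in hand, the given control $\mathcal{P}_0^*$ solves $\EWLS(\XL, \kappal) = \EWLS(\widetilde{\XL}, (\alpha\kappal)/\alpha)$ with wealth level equal to the $\argmax$ threshold \eqref{W_star_1} of the $\EWES(\alpha, \alpha\kappal)$ problem. Proposition \ref{prop_time}(ii) then immediately yields that $\mathcal{P}_0^*$ solves $\EWES(\alpha, \alpha\kappal) = \EWES(\alpha^*_{\kappal}(\XL), \alpha^*_{\kappal}(\XL)\kappal)$, which is the claim.

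The only substantive step is the identification $\widetilde{\XL} = \XL$, which is precisely what Assumption \ref{as:inverse} is designed to deliver; without this invertibility one could only conclude that $\XL$ and $\widetilde{\XL}$ both map to the same $\alpha$-shortfall probability under their respective EW-LS solutions, leaving open the possibility of distinct thresholds producing the same $\alpha$. A subsidiary point, already handled earlier in the excerpt, is that the $\argmax$ in \eqref{W_star_1} is well defined and may be realized by the $\alpha$-VaR of the EW-ES optimal terminal wealth; this rests on the continuity-in-distribution hypothesis inherited from \citet{Uryasev_2000} and used in the proof of Proposition \ref{prop_time}, so no new machinery is required beyond what the excerpt provides.
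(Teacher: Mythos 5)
Your proof is correct and follows essentially the same route as the paper's: you introduce the candidate problem $\EWES(\alpha^*_{\kappal}(\XL),\, \alpha^*_{\kappal}(\XL)\,\kappal)$, use Proposition \ref{prop_time}(i) together with equation \eqref{PCEE_props_2} to show its VaR threshold $\widetilde{\XL}$ satisfies $\alpha^*_{\kappal}(\widetilde{\XL}) = \alpha^*_{\kappal}(\XL)$, invoke Assumption \ref{as:inverse} to conclude $\widetilde{\XL} = \XL$, and then apply Proposition \ref{prop_time}(ii). This matches the paper's argument step for step, with only a more explicit labelling of the intermediate EW-ES solution and its threshold.
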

\begin{proof}
Consider $\EWLS(\XL, \kappal)$ for a given $(\XL,\kappal) \in {\mathcal{D}^+_{LS}}$. 
Let $\alpha^*_{\kappal}(\XL)$ be defined in \eqref{alphaK}.
Consider $\EWES(\alpha^*,\alpha^* \kappal)$ where $\alpha^* = \alpha^*_{\kappal}(\XL)$. 
Note that by definition of ${\mathcal{D}^+_{LS}}$, we must have
$(\alpha^*,\alpha^* \kappal) \in {\mathcal{D}}_{ES}$.
Proposition \ref{prop_time} (i)  shows that  a solution  of $\EWES(\alpha^*,\kappal \alpha^*)$ 
is a solution to linear shortfall $\EWLS(\XL^*,\kappal)$ problem for  
$\XL^*$  defined in \eqref{W_star_1} with 
prob($W_T^* <\XL^*$) = $\alpha^*_{\kappal}(\XL^*) = \alpha^*$. 
Hence
$$
 \alpha^*_{\kappal}(\XL)= \alpha^*_{\kappal}(\XL^*)=\alpha^*.
$$
Since $\alpha^*_{\kappal}(\XL)$ is invertible, we have  that $\XL=\XL^*$.  
Applying Proposition \ref{prop_time} (ii), using $\XL=\XL^*$ given in \eqref{W_star_1},
a solution to $\EWLS(\XL,\kappal)$  solves 
$\EWES(\alpha^*,\alpha^*\kappal)$, where  $\alpha^*=\alpha^*_{\kappal}(\XL)$.
This completes the proof.
\end{proof}
Applying  Corollary \ref{corollary_1} and Proposition \ref{prop_ew_ls}, we obtain the following Corollary \ref{corollary_2}.

\begin{corollary}\label{corollary_2}
Suppose Assumption \ref{as:inverse} holds for  any  $ (\XL,\kappal) \in {\mathcal{D}^+_{LS}}$.
 Let  ${\mathcal{H}}_{\ES}^*$ and 
${\mathcal{H}}^*_{\LS}$ be defined in \eqref{optSet}.
Then the set  ${\mathcal{H}}_{\ES}^{*} $ of optimal controls for Problem $\EWES$ is  identical to
 the set   ${\mathcal{H}}^{*+}_{\LS} $  of optimal controls for Problem $\EWLS$, where
 $${\mathcal{H}}^{*+}_{\LS} = \{ \mathcal{P}_0^*(\XL,\kappal) \in{\mathcal{H}}_{\LS}^{*}  \text{ and } (\XL,\kappal) \in  {\mathcal{D}^+_{LS}} \}. $$
\end{corollary}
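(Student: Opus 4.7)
The plan is to prove the two set inclusions $\mathcal{H}_{\ES}^* \subseteq \mathcal{H}_{\LS}^{*+}$ and $\mathcal{H}_{\LS}^{*+} \subseteq \mathcal{H}_{\ES}^*$ separately, leveraging what has already been established in Proposition \ref{prop_time} and Proposition \ref{prop_ew_ls}. The bulk of the argument is essentially a bookkeeping exercise: taking a solution on one side, invoking the appropriate earlier result to get a solution on the other side, then verifying that the induced parameter pair lands in the correct (restricted) domain.

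For the forward inclusion $\mathcal{H}_{\ES}^* \subseteq \mathcal{H}_{\LS}^{*+}$, I would start with an arbitrary $\mathcal{P}_0^* \in \mathcal{H}_{\ES}^*$, so that $\mathcal{P}_0^*$ solves $\EWES(\alpha,\kappa)$ for some $(\alpha,\kappa) \in \mathcal{D}_{ES}$. Proposition \ref{prop_time}(i) then gives that $\mathcal{P}_0^*$ solves $\EWLS(\XL^*, \kappa/\alpha)$ with $\XL^*$ as defined in \eqref{W_star_1}. The extra step needed, beyond what Corollary \ref{corollary_1} already provides, is to check $(\XL^*,\kappa/\alpha) \in \mathcal{D}^+_{LS}$. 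This is where equation \eqref{PCEE_props_2} does the work: it forces $\text{Prob}[W_T^* < \XL^*] = \alpha$, and by the definition of $\alpha^*_{\kappal}(\cdot)$ in \eqref{alphaK} this reads $\alpha^*_{\kappa/\alpha}(\XL^*) = \alpha \in (0,1)$, which is exactly the defining condition of $\mathcal{D}^+_{LS}$. Hence $\mathcal{P}_0^* \in \mathcal{H}_{\LS}^{*+}$.

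For the reverse inclusion $\mathcal{H}_{\LS}^{*+} \subseteq \mathcal{H}_{\ES}^*$, take an arbitrary $\mathcal{P}_0^* \in \mathcal{H}_{\LS}^{*+}$ that solves $\EWLS(\XL,\kappal)$ for some $(\XL,\kappal) \in \mathcal{D}^+_{LS}$. This direction is already encoded in Proposition \ref{prop_ew_ls}: under Assumption \ref{as:inverse}, $\mathcal{P}_0^*$ solves $\EWES(\alpha^*_{\kappal}(\XL),\, \alpha^*_{\kappal}(\XL)\kappal)$ with the parameter pair lying in $\mathcal{D}_{ES}$, hence $\mathcal{P}_0^* \in \mathcal{H}_{\ES}^*$. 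Combining the two inclusions yields the desired identity.

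The main (and only real) obstacle is confirming that the parameter pair produced on each side lies in the opposite problem's \emph{restricted} domain — in particular, that $\alpha^*_{\kappa/\alpha}(\XL^*)$ sits strictly inside $(0,1)$ in the forward direction. Fortunately this is a direct consequence of the Uryasev-style VaR identity derived in \eqref{PCEE_props_2}, so no new analytic work is required beyond invoking the two prior propositions and the definition of $\mathcal{D}^+_{LS}$.
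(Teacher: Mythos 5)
Your proposal is correct and follows essentially the same route as the paper: the forward inclusion via Proposition \ref{prop_time}(i) together with the VaR identity \eqref{PCEE_props_2} to certify that $(\XL^*,\kappa/\alpha)$ lands in $\mathcal{D}^+_{LS}$, and the reverse inclusion as a direct application of Proposition \ref{prop_ew_ls}. Your write-up is in fact slightly more explicit than the paper's, which merely asserts $0<\alpha^*_{\kappal}(\XL^*)<1$ from Proposition \ref{prop_time} without naming \eqref{PCEE_props_2} as the mechanism.
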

\begin{proof}
From Proposition \ref{prop_time}, any optimal solution $\mathcal{P}_0^*$ of $\EWES$ satisfies $0<\alpha^*_{\kappal}(\XL^*)<1$, $\XL^*$ defined in \eqref{W_star_1}.  Hence we have
 $$\mathcal{H}_{\ES}^* \subset  \mathcal{H}_{\LS}^{*+}. $$
 Conversely, following Proposition \ref{prop_ew_ls},  if $ \mathcal{P}_0^* \in  \mathcal{H}_{\LS}^{*+}$, then 
 $
 \mathcal{P}_0^* \in  \mathcal{H}_{\ES}^{*}
 $, i.e., 
 $
  \mathcal{P}_0^* \in  \mathcal{H}_{\ES}^{*}.
  $ Hence
  $$
  {\mathcal{H}}_{\ES}^{*} \equiv {\mathcal{H}}_{\LS}^{*+}.
  $$
\end{proof}


Corollary \ref{corollary_2} essentially states that,
under Assumption \ref{as:inverse},  the set  of optimal controls associated with all  
$\EWES$ Pareto  efficient frontier curves, 
$(\alpha, \kappa)
 \in {\mathcal{D}}_{ES}$  
is identical to
 the set of optimal controls for all $\EWLS$ Pareto efficient frontier curves with
$(\XL, \kappal) \in {\mathcal{D}}^+_{LS}$. 

\begin{remark}[Significance of Propositions \ref{prop_time} and \ref{W_star_1}]
Proposition \ref{prop_time} (i) shows that any control which solves Problem $\EWES$ solves
the Problem $\EWLS$ with fixed $\XL$ given by
equation (\ref{W_star_1}).  Proposition \ref{prop_time} (ii) informs
us that for certain values of $(\XL, \kappal)$, the optimal control
for Problem $\EWLS$ also solves problem $\EWES$.  This is also true more generally,
for points $(\XL,\kappal)$ satisfying Assumption \ref{as:inverse}.  
It would be interesting to discover conditions on Problem $\EWLS$ which are required to
guarantee that Assumption \ref{as:inverse} holds. We leave this for future work.
\end{remark}
}
}

{\myblue{
\begin{remark}[Numerical experiments: $\EWLS \rightarrow \EWES$]
Problems $\EWLS$ and $\EWES$ are solved numerically, as discussed in
Appendix \ref{Numerical_Appendix}.  
Equation (\ref{alpha_kappa_def}) is approximated
using Monte Carlo methods. 
For values of $(\kappal, \XL)$
such that $\alpha^*_{\kappal}(\XL)$ is small (i.e. $\alpha^*_{\kappal} < .02$),
Proposition \ref{prop_ew_ls}  does not appear to hold.
This may be a result of numerical errors in
approximating the $\alpha$-VAR for small $\alpha$.
\end{remark}
}
}  

\section{Numerical Comparison of Different Risk-Reward Pairs }

\subsection{Data}\label{data_section}
We use data from the Center for Research in Security Prices (CRSP)
on a monthly basis over the 1926:1-2023:12 period.\footnote{More
specifically, results presented here were calculated based on data from
Historical Indexes, \copyright 2024 Center for Research in Security
Prices (CRSP), The University of Chicago Booth School of Business.
Wharton Research Data Services (WRDS) was used in preparing this
article. This service and the data available thereon constitute valuable
intellectual property and trade secrets of WRDS and/or its third-party
suppliers.} Our base case tests use the CRSP US 30 day T-bill for the
bond asset and the CRSP value-weighted total return index for the stock
asset. This latter index includes all distributions for all domestic
stocks trading on major U.S.\ exchanges. All of these various indexes
are in nominal terms, so we adjust them for inflation by using the U.S.\
CPI index, also supplied by CRSP. We use real indexes since investors
funding retirement spending should be focused on real (not nominal)
wealth goals.

We use the parametric model for the real stock index and
real constant maturity bond index described in Appendix \ref{parametric_model_appendix}.

\begin{remark}[Choice of 30-day T-bill for the bond index] 
It might be argued that the bond index should hold longer-dated bonds
such as 10-year Treasuries since this would allow the investor to
harvest the term premium. Long-term bonds  enjoyed high real returns
during 1990-2022.
However, it is unlikely that this will continue to be true
over the next 30 years.  For example, during the
period 1950-1983, long term bonds had negative 
real returns \citep{Hatch_1985}, while short-term T-bills
had positive real returns.
If one imagines that the next 30 years
will be a period with inflationary pressures, this suggests that the
defensive asset should be short-term T-bills. 
Note that the historical real return of short-term T-bills
over 1926:1-2023:12 is approximately zero. Hence our use of T-bills as
the defensive asset is a conservative approach going forward.
\end{remark}

\begin{remark}[Sensitivity to Calibrated Parameters]
Some readers might suggest that the stochastic processes
(\ref{jump_process_stock}-\ref{jump_process_bond}) are
simplistic, and perhaps inappropriate. However, we will
test the optimal strategies (computed assuming processes
(\ref{jump_process_stock}-\ref{jump_process_bond}) ) with calibrated
parameters in Table \ref{fit_params} using bootstrap resampled
historical data (see Section \ref{boot_section} below). The computed
strategy seems surprisingly robust to model misspecification. Similar
results have been noted for the case of multi-period mean-variance
controls \citep{surprise_2021}.
We conjecture that this robustness is due to the self-correcting
nature of feedback controls.
\end{remark}

\subsection{Historical Market}\label{boot_section}

We compute and store the optimal
controls based on the parametric model (\ref{jump_process_stock}-\ref{jump_process_bond}) as for the
synthetic market case. However, we compute statistical quantities using
the stored controls, but using bootstrapped historical return data
directly. In this case, we make no assumptions concerning the stochastic
processes followed by the stock and bond indices. We remind the
reader that all returns are inflation-adjusted. We use the stationary
block bootstrap method \citep{politis1994,politis2004,politis2009,
Cogneau2010,dichtl2016,Scott_2022,Simonian_2022,Cederburg_2022}. 

A key parameter is the expected blocksize. Sampling the data in blocks
accounts for serial correlation in the data series. We use the algorithm
in \citet{politis2009} to determine the optimal blocksize for the bond
and stock returns separately, see Table \ref{auto_blocksize}. We use a
paired sampling approach to simultaneously draw returns from both time
series. In this case, a reasonable estimate for the blocksize for the
paired resampling algorithm would be about $2.0$ years. We will give
results for a range of blocksizes as a check on the robustness of the
bootstrap results. Detailed pseudo-code for block bootstrap resampling
is given in \citet{Forsyth_Vetzal_2019a}.

\begin{table}[tb]
\begin{center}
{\small
\begin{tabular}{lc} \toprule
Data series          & Optimal expected \\
                     & block size $\hat{b}$ (months) \\ \midrule
 Real 30-day T-bill  index    &  50.805 \\
  Real CRSP value-weighted index &  3.17535 \\
\bottomrule
\end{tabular}
}
\end{center}
\caption{Optimal expected blocksize $\hat{b}=1/v$ when the blocksize
follows a geometric distribution $Pr(b = k) = (1-v)^{k-1} v$. The
algorithm in \citet{politis2009} is used to determine $\hat{b}$.
Historical data range 1926:1-2023:12.
\label{auto_blocksize}
}
\end{table}

\subsection{Investment Scenario}
Table \ref{base_case_1} shows our base case investment scenario. We use
thousands of dollars as our units of wealth. For example, a withdrawal
of $40$ per year corresponds to $\$40,000$ per year (all values are
real, i.e.\ inflation-adjusted), with an initial wealth of $1000$ (i.e.\
$\$1,000,000$). This would correspond to the use of the four per cent
rule \citep{Bengen1994}.  Recall that we assume that the investor
has real estate, which is in a separate mental bucket \citep{Shefrin-Thaler:1988}.
Real estate is a hedge of last resort, used to fund required minimum
cash flows \citep{Pfeiffer_2013}.  We assume that the retiree owns
mortgage free real estate worth $\$400,000$, of which $\$200,000$ can
be easily accessed using a reverse mortgage.

\begin{table}[hbt!]
\begin{center}
\begin{tabular}{lc} \toprule
Investment horizon $T$ (years) & 30.0  \\
Equity market index & CRSP Cap-weighted index (real) \\
Bond index & 30-day T-bill (US) (real) \\
Initial portfolio value $W_0$  & 1000 \\
Mortgage free real estate & 400 \\
Cash withdrawal$/$rebalancing times & $t=0,1.0, 2.0,\ldots, 29.0$\\
Maximum withdrawal (per year)   & $ q_{\max} = 60$\\
Minimum withdrawal (per year)   & $ q_{\min} = 30$\\
Equity fraction range & $[0,1]$\\
Borrowing spread $\mu_c^{b}$ & 0.03 \\
Rebalancing interval (years) & 1.0  \\
Target Wealth $\XL$  (EW-xS, $x=\{P,L\}$)  & 0.0 \\
$\alpha$ (EW-ES)               & .05 \\
Stabilization $\epsilon$ (see equation (\ref{PCES_a})) & $ -10^{-4} $ \\
Market parameters & See Table~\ref{fit_params} \\ \bottomrule
\end{tabular}
\caption{Input data for examples.  Monetary units: thousands of dollars.
\label{base_case_1}}
\end{center}
\end{table}

\subsection{Numerical Results}
We use the numerical method described in \citep{forsyth:2022,Forsyth_2024} to compute
the optimal controls, which is based on solving
a Partial Integro-Differential Equation (PIDE), combined
with discretizing the controls and finding the
optimal values by exhaustive search.  A brief overview is given in Appendix \ref{Numerical_Appendix}.

We compute optimal strategies from $\EWES(\alpha, \kappae)$ with $\alpha=0.05$ and $\kappae>0$. For  $\EWLS(\XL,\kappal)$ and $\EWPS(\XL,\kappal)$, we compute optimal strategies with $\XL=0$ and $\kappal>0$. 
\subsection{Convergence}
Appendix \ref{app_convergence} shows convergence as the number of grid
nodes increases, for a single point on the synthetic market EW-LS efficient 
frontier.
It is perhaps more instructive to examine the convergence
of the efficient frontiers.  
Figure \ref{synthetic_frontier_fig} shows the convergence of
the EW-LS frontier, as a function of the PIDE nodes.  The curves for
different numbers of nodes essentially overlap, indicating
satisfactory convergence.  Similar results were obtained for the other strategies.

\begin{figure}[htb!]
\centerline{%
\includegraphics[width=3.0in]{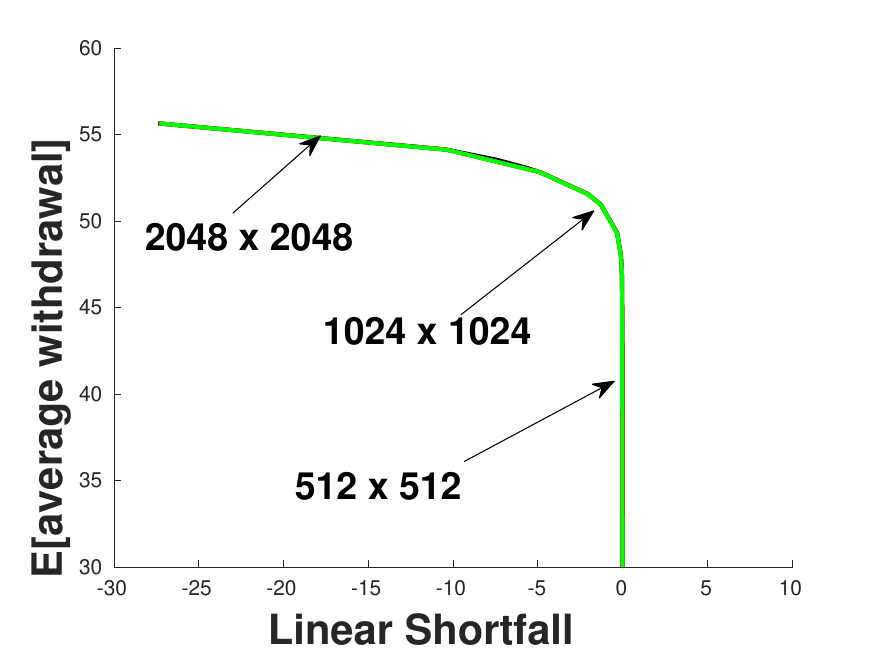}
}
\caption{
EW-LS convergence test.
Real stock index: deflated real capitalization
weighted CRSP, real bond index: deflated 30 day T-bills. Scenario in
Table~\ref{base_case_1}. Parameters in Table~\ref{fit_params}. 
The optimal
control is determined
by solving the  PIDEs as described in
Appendix \ref{Numerical_Appendix}.
Grid refers to the grid used in the algorithm in Appendix
\ref{Numerical_Appendix}: $n_x \times n_b$, where $n_x$ is the number of
nodes in the $\log s$ direction, and $n_b$ is the number of nodes in the
$\log b$ direction. Units: thousands of dollars (real).
The controls are stored, and then the final results
are obtained using a Monte Carlo method,
with $2.56 \times 10^6$ simulations.
Target wealth $\XL=0.0$.
}
\label{synthetic_frontier_fig}
\end{figure}

\subsection{Stabilization term}
In Appendix \ref{Appendix_stablizaton}, we show the effect of
changing the sign of the stabilization term for the EW-PS problem
on the CDF of the final wealth $W_T$.  The stabilization
term has almost no effect on the CDF near $\XL=0$, but does
change the CDF for large values of wealth.  This is 
because the choice of controls for large values of wealth,
as $t \rightarrow T$ is essentially arbitrary.  For large
values of realized wealth, the investor
can choose 100\% bonds or 100\% stocks, and the objective
function will be almost unaffected.\footnote{The 94 year old Warren Buffet, whose net worth
exceeds 145 billion USD, can choose to invest either 100\% in stocks or 100\% in bonds,
and will never run out of savings.}

\subsection{Efficient Frontiers: EW-LS, EW-PS, EW-ES}

In this section,  we compare efficient frontier curves 
computed from $\EWES(\alpha,\kappae)$ with fixed $\alpha = 0.05$ and $\kappae>0$,
and EW-xS$_{t_0}(\XL,\kappal)$ ($x=$P,L) with $\XL=0$ and $\kappal>0$. 
We choose this comparison setting since it seems more 
immediately relevant from a retiree's perspective. 

We assess the performance of these
strategies in the performance domain (EW,LS), (EW,PS), and (EW,ES)  in Figures
\ref{EW_LS_compare_all_fig}, 
\ref{EW_PS_compare_all_fig},  and \ref{EW_ES_compare_all_fig} respectively.

Since all three formulations share the same reward,  
the top left sides  of efficient frontiers from all strategies are expected 
to converge asymptotically (as risk aversion parameter goes to zero)  
in  all performance domains (EW,LS), (EW,PS), and (EW,ES).  

Note that efficient frontier curves  in either performance domain (EW,ES) or 
(EW,LS), from $\EWES(\alpha,\kappa)$ and $\EWLS(\XL,\kappal)$, for fixed $\alpha$ and $\XL$,  
are not expected to coincide, except possibly at single points.

As the risk aversion parameter 
increases, efficient frontiers  on the right side from 
different formulations are expected to deviate from each other more significantly. 
Overall, \ref{EW_LS_compare_all_fig}, Figure \ref{EW_PS_compare_all_fig},  and 
\ref{EW_ES_compare_all_fig}  do demonstrate larger differences in 
efficient frontier curves on the right side, see particularly EW-ES frontiers in Figure \ref{EW_ES_compare_all_fig}.
This confirms that the choice of objective function 
is important in achieving risk control. 
Subsequently we compare and contrast efficient frontiers  in more detail.

\subsubsection{EW-LS}
Figure \ref{EW_LS_compare_all_fig} plots  frontier curves in  the (EW,LS) domain.  We compute  EW-LS,
EW-ES and EW-PS optimal controls, but plot their  (EW,LS) performance measures in the
same figure.
Naturally the frontier curve of the EW-LS control must plot above all the other curves (since the objective
function of EW-LS aligns with the specified measures).
However, it is interesting to see that the EW-ES and EW-PS controls
are not overly suboptimal, relative  to $\EWLS(\XL=05,\kappal)$, using (EW,LS)  criteria.

From Proposition \ref{prop_time}, we expect that there is a point (with target wealth $\XL=0$)  at which the EW-ES and EW-LS curve
coincide.  In Figure \ref{EW_LS_compare_all_fig}, we can see that
this point occurs at $EW \simeq 52.3$.  Figure \ref{EW_LS_compare_all_fig} also shows the results for
the Bengen strategy \citep{Bengen1994}.\footnote{Recall that the recommended policy is to withdraw 4\% of the
initial capital per year, inflation adjusted, and to rebalance to a weight of 50\% stocks annually.
The 4\% withdrawal would correspond to 40 per year for our example.}  We can see that the Bengen
strategy is considerably suboptimal compared to any of the other strategies.  However, it is only fair to 
point out that the Bengen strategy always withdraws $40$ per year (units thousands), while
the other strategies have minimum withdrawals of $30$ per year.

\begin{figure}[htb!]
\centerline{%
\includegraphics[width=3.0in]{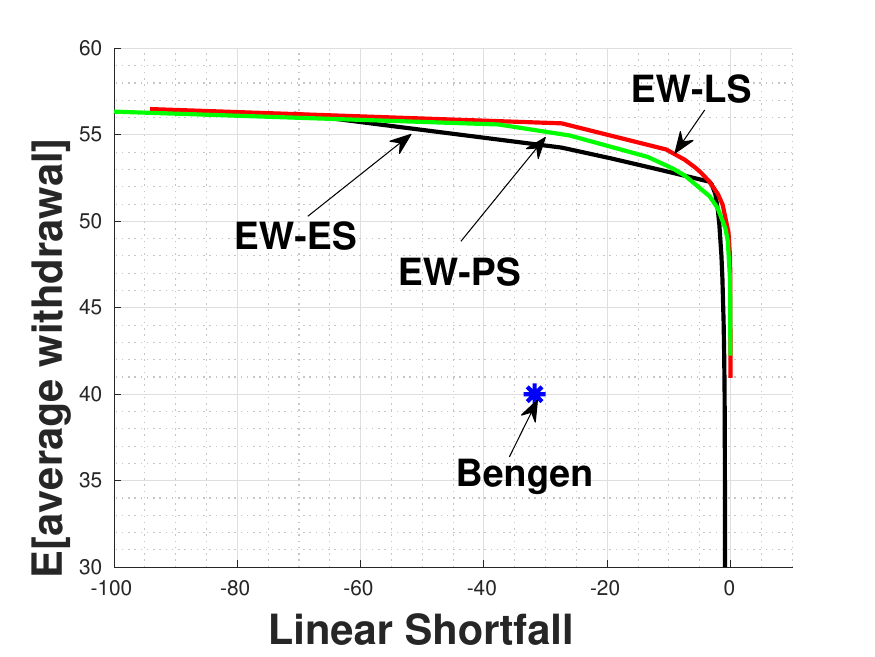}
}
\caption{
EW-LS efficient frontier.
Real stock index: deflated real capitalization
weighted CRSP, real bond index: deflated 30 day T-bills. Scenario in
Table~\ref{base_case_1}. Parameters in Table~\ref{fit_params}. 
Synthetic market.  
Controls computed using EW-PS and EW-ES,   and results
plotted in terms of EW-LS criteria.  The EW-LS frontier plots above
the the controls computed using EW-PS and EW-ES  objective functions.
The Bengen control withdraws 40 per year, and rebalances
annually to 50\% bonds and 50\% stocks. The wealth target level $\XL = 0$ for both $\EWLS$ and $\EWPS$.
}
\label{EW_LS_compare_all_fig}
\end{figure}

\subsubsection{EW-PS}
Figure \ref{EW_PS_compare_all_fig} plots the EW-PS efficient frontier.  Along the
x-axis we plot $Prob[W_T >0 ] = 1 - Prob[W_T<0]$, to produce consistent shapes
for the frontiers.   As before,
we compute the
EW-ES and EW-LS efficient controls, but plot them using PS as a risk measure.
As expected, the EW-PS frontier plots above the other curves (it is, after all,
the efficient strategy according to the $Prob[W_T>0]$ risk measure).

There is somewhat more variation in these curves compared to Figure \ref{EW_LS_compare_all_fig}.
In particular, the EW-PS and EW-LS curves generate $Prob[W_T>0] \simeq 0.9998$  for the largest
values of $\kappa$ (the right hand most point on the curves).  In contrast,
the EW-LS strategy never gets above $Prob[W_T>0] \simeq 0.994$.  We also see that
the EW-LS curve flat-tops  to the left of $EW\simeq 52$.\footnote{If we extend the x-axis to the
left, then, eventually, all three curves meet.  However, these points have an uncomfortably
large  $Prob[W_T] <0$, hence are not of practical interest.}

\subsubsection{EW-ES}

Figure \ref{EW_ES_compare_all_fig} shows the EW-ES efficient frontier.  We also show (EW,ES) measures
for optimal  $\EWPS(0,\kappa)$ and $\EWLS(0,\kappa)$ strategies.   The curves for EW-PS and
EW-LS  are very similar.  However, for $ES > 0$, the EW-ES curve is dramatically
different.  This can be explained as follows.  

The EW-ES strategy moves towards maximizing
ES as $\kappa$ becomes large.  This comes at the expense of decreased $Prob[W_t > 0]$ (from Figure \ref{EW_PS_compare_all_fig}).
On the other hand, the EW-LS and EW-PS strategies have no risk if $W_T > 0$,
so focus entirely on increasing EW, if $W_t > 0$ as  \textcolor{red}{$t \rightarrow T$}.  Effectively,
this means that for the EW-LS and EW-PS strategies, it does not make sense to
consider points in the efficient frontier which are below the {\em knee} of the
curves.  To the left of the {\em knee} of the curves, EW-LS is very close to the
EW-ES curve.

Recall that we have assumed that the retiree can access \$200K using a reverse mortgage
with real estate as collateral.  Consequently, as a rule of thumb, any point on any
frontier which has $ES > -200$ is acceptable from a risk management point of view.
In other words the mean of the worst 5\% of the outcomes can be hedged using
real estate.  In particular, we can see that the Bengen strategy fails this
risk management test.

\begin{figure}[tb]
\centerline{%
\begin{subfigure}[t]{.40\linewidth}
\centering
\includegraphics[width=\linewidth]{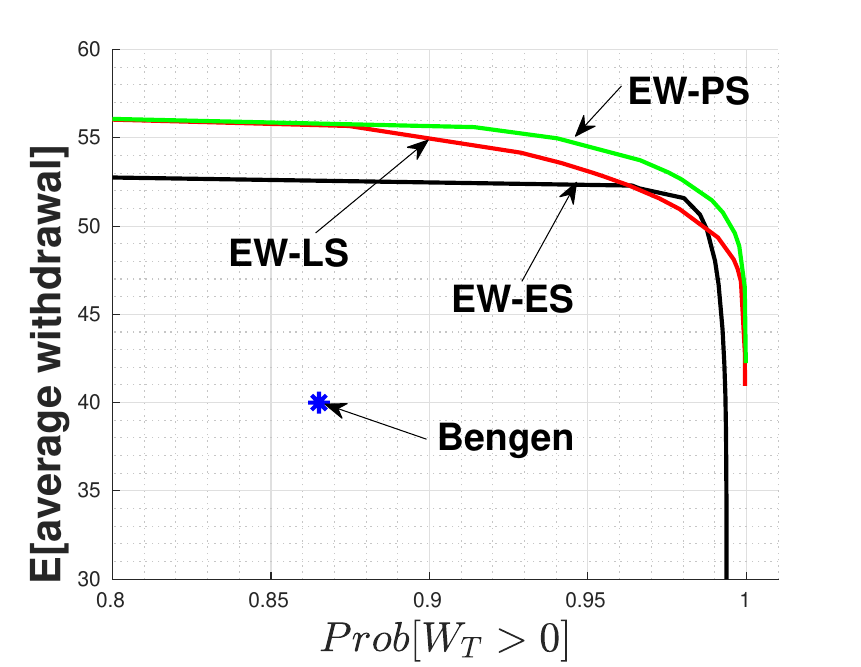}
\caption{EW-PS efficient frontier.  Results for EW-LS and EW-ES also shown.}
\label{EW_PS_compare_all_fig}
\end{subfigure}
\begin{subfigure}[t]{.40\linewidth}
\centering
\includegraphics[width=\linewidth]{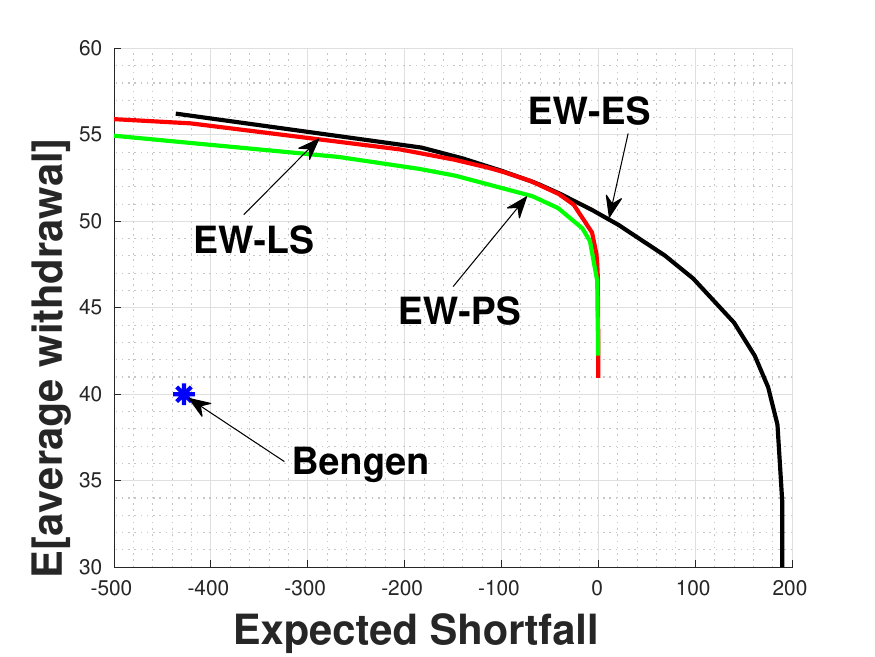}
\caption{
  EW-ES efficient frontier.  Results for EW-LS and EW-PS also shown.
}
\label{EW_ES_compare_all_fig}
\end{subfigure}
}
\caption{
EW-PS and EW-ES efficient frontiers.
Real stock index: deflated real capitalization
weighted CRSP, real bond index: deflated 30 day T-bills. Scenario in
Table~\ref{base_case_1}. Parameters in Table~\ref{fit_params}.
Synthetic market.
The Bengen control withdraws 40 per year, and rebalances
annually to 50\% bonds and 50\% stocks.
Target wealth $\XL=0$ for EW-LS and EW-PS.
}

\label{ES_PS_compare_all_fig}
\end{figure}

\subsubsection{Summary of efficient frontier comparison}
It is relevant  to compare performance of optimal strategies using risk measures which are not directly
included in their respective objective functions.  This helps an investor
understand consequences of implementing a specific strategy 
from in terms of  different but relevant performance measures.

Our first  observation is that in all cases, whatever 
the strategy or risk measure, the Bengen strategy is
significantly sub-optimal.
We also make the following additional observations:

\begin{itemize}

\item
Firstly recall that  we  use the same target
wealth level $\XL=0$ for  both $\EWLS$ and $\EWPS$.  We observe that their frontier curves are close in all 
three measurement domains, (EW,LS), (EW,PS) and (EW,ES), even asymptotically as the 
risk aversion parameter $\kappa \rightarrow +\infty$.  
This suggests that choosing $\EWLS$ also leads to good performance in terms of $\EWPS$ .
Furthermore, since linear shortfall is an expectation of piecewise linear 
shortfall, i.e., $E(\max(W_T-\XL,0))$,  while the probability function is the expectation of 
a discontinuous indicator function, i.e., $E({\bf{1}}_{W_T < \XL})$, consequently
solving $\EWLS$ can be  computationally  preferable to solving
$\EWPS$.

\item Choosing a suitable risk measure as part of the objective function 
which aligns with the desired decumulation goals does matter.  
Different risk measures can  lead to very different performing 
strategies. This is particularly important 
in decumulation. For example, Figure \ref{EW_PS_compare_all_fig} shows 
that the optimal  $\EWES$ strategy is inefficient at minimizing the probability of negative 
terminal wealth. The smallest probability of negative wealth 
achieved is at the expense of steeply diminishing reward.
Similarly Figure \ref{EW_ES_compare_all_fig} shows that, 
with the  wealth target $\XL=0$, the   5\% ES risk associated with the 
optimal  $\EWLS$ and $\EWPS$ strategies as the risk aversion parameter 
$\kappal \rightarrow +\infty$ is far from the optimal 5\%-ES risk achievable.

\item
While Figure \ref{EW_LS_compare_all_fig}  seems to indicate
that all the strategies perform reasonably well in terms of the 
LS risk measure, we note that there is a similar 
steeper drop from the optimal $\EWES$ strategy  
as the risk aversion parameter goes to $+\infty$. 
We further note that the scales of the horizontal axis in  
Figure \ref{EW_LS_compare_all_fig} and 
Figure \ref{EW_PS_compare_all_fig} are very different. 
Optimal $\EWES$ strategies are unable to achieve 
the minimum LS risk and the smallest LS risk strategy 
is achieved at the expense of suboptimal rewards.

\end{itemize}

From Figure \ref{EW_PS_compare_all_fig} we can see that, in terms of the PS risk measure,
EW-LS plots a bit below the optimal EW-PS frontier.  However, the EW-ES curve has
a very unusual behaviour (in terms of PS risk).  Any increase in EW above about 53 causes
a large decrease in $Prob[W_T >0]$.

Turning attention to Figure \ref{EW_ES_compare_all_fig}, in terms of ES risk, all strategies
behave similarly to the left of $ES \simeq 0$.  However, the EW-ES efficient frontier continues
to generate positive ES for $EW < 50$.  Essentially, this is because the EW-ES strategy focuses
on maximizing ES, but at the expense of giving up increases in $Prob[W_T >0]$.
Recall from our previous discussion that the EW-PS and EW-LS strategies
only make sense if we look at points to the left of the {\em knee} of the
curves.

From a practical point of view, it is not clear that maximizing ES when it is positive
is consistent with the retiree's view of risk (i.e. running out of savings).

\subsection{Tests for robustness: bootstrap resampling}
We compute and store the optimal controls for EW-PS, EW-LS and EW-ES objective
functions, based on the parametric market model described in Appendix \ref{parametric_model_appendix}.
We then test these controls using block bootstrap resampling of the market
data in 1926:1-2023:12 \citep{politis1994,politis2004,politis2009,
Cogneau2010,dichtl2016,Scott_2022,Simonian_2022,Cederburg_2022}.

Figure \ref{compare_bootstrap_all} compares the synthetic market results (test and train on the parametric
market model) as well as testing this control on bootstrapped historical data, for all
three objective functions: EW-LS, EW-PS and EW-ES.  The bootstrapped tests are carried out
for a range of expected blocksizes.  In all cases, for all blocksizes, the efficient frontiers are quite
close, indicating that the controls computed using the parametric market model in Appendix \ref{parametric_model_appendix}
are robust to model misspecification.

\begin{figure}[tb]
\centerline{%
\begin{subfigure}[t]{.30\linewidth}
\centering
\includegraphics[width=\linewidth]{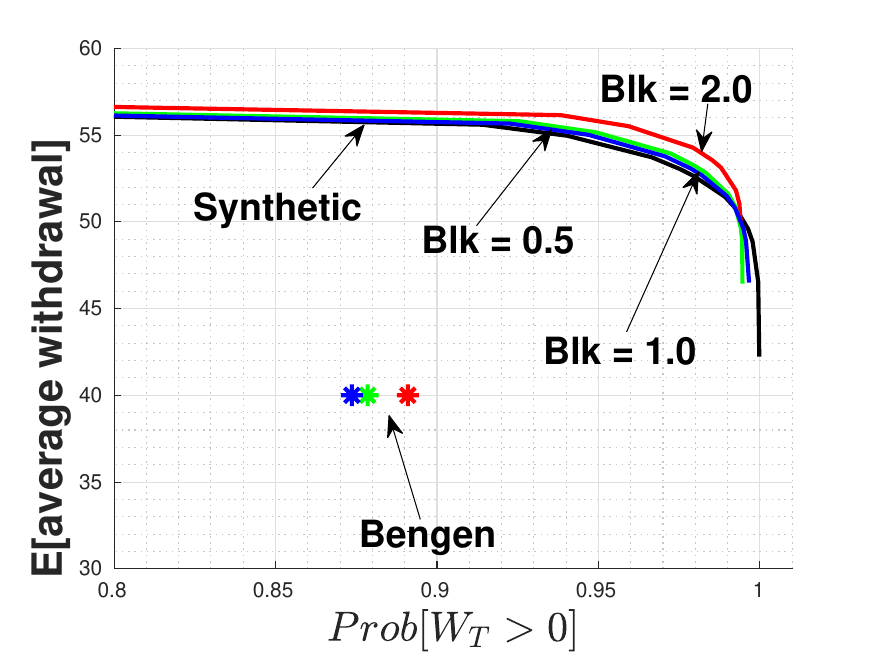}
\caption{EW-PS efficient frontier, bootstrapped simulations. }
\label{EW_PS_boot_fig}
\end{subfigure}
\begin{subfigure}[t]{.30\linewidth}
\centering
\includegraphics[width=\linewidth]{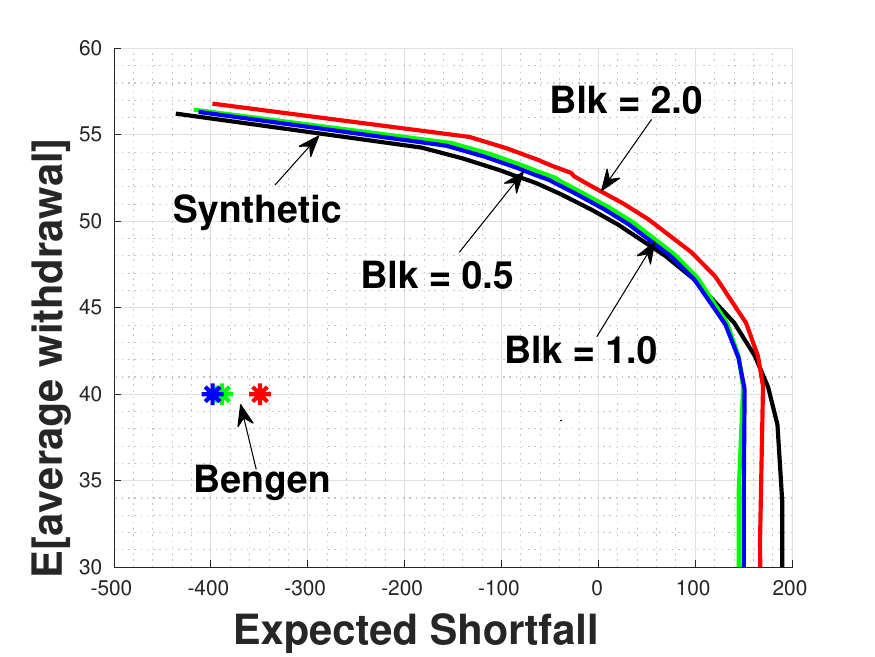}
\caption{
  EW-ES efficient frontier, bootstrapped simulations.
}
\label{EW_ES_boot_fig}
\end{subfigure}
\begin{subfigure}[t]{.30\linewidth}
\centering
\includegraphics[width=\linewidth]{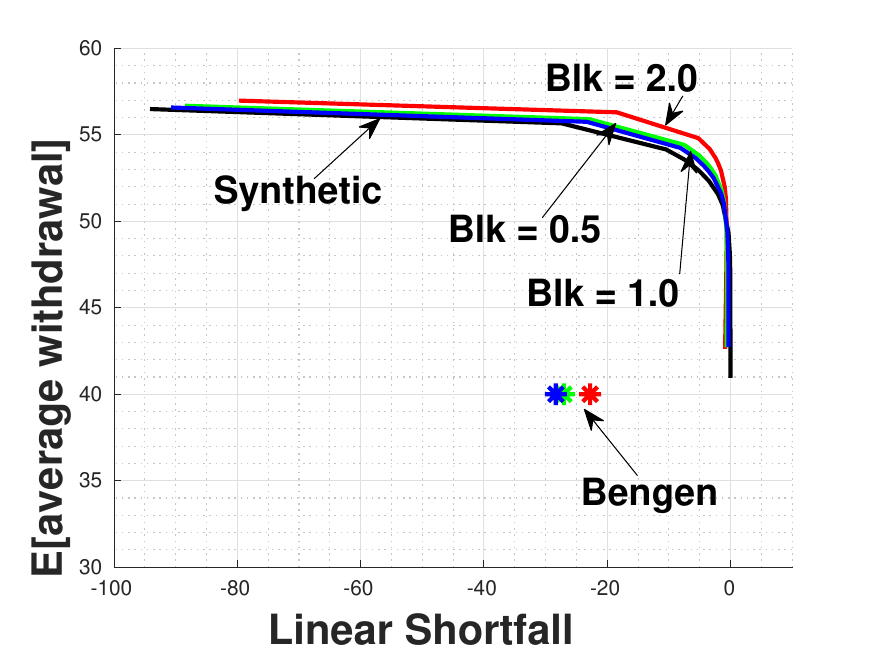}
\caption{
  EW-LS efficient frontier, bootstrapped simulations.
}
\label{EW_LS_boot_fig}
\end{subfigure}
}
\caption{
Optimal controls computed using the synthetic market model.  These
controls tested using bootstrapped historical data.
Expected blocksizes (years) shown.  $10^6$ bootstrap resamples.
Real stock index: deflated real capitalization
weighted CRSP, real bond index: deflated 30 day T-bills. Scenario in
Table~\ref{base_case_1}. Parameters in Table~\ref{fit_params}.
The Bengen control withdraws 40 per year, and rebalances
annually to 50\% bonds and 50\% stocks.  The Bengen results
are also shown for expected blocksizes of $0.5, 1.0, 2.0$ years.
}
\label{compare_bootstrap_all}
\end{figure}

Further insight can be obtained by examining the summary statistics
in Table \ref{CDF_table} (synthetic market) and
Table \ref{CDF_table_boot} (historical market).  It would seem that the EW-LS strategy
is a good compromise, having a relatively small $Prob[W_T <0]$, and
with an expected shortfall close to the optimal value from the
EW-ES solution.  Note that $\XL$ is a byproduct of the optimization
algorithm for the EW-ES problem.  This may not correspond, intuitively,
to the investor's preferences.   For example, as move rightward along the
EW-ES efficient frontier, $\XL$ becomes a large positive value. Any value
of $W_T$ to the left of this point, is regarded (by the objective function)
as a bad outcome, which probably does not correspond to most investor's
concept of risk.

In contrast, $\XL$ is an input parameter
for EW-PS and EW-LS.  In our case, since our main concern is running
out of cash, setting $\XL=0$ is clearly a reasonable choice.

{\myblue{
Note that Table \ref{CDF_table_boot}  shows that
the ES(5\%) result for the EW-LS control (computed in the synthetic market) is actually better than for the
EW-ES control (also computed in the synthetic market) control, when tested
in the historical market.  This suggests that the EW-LS control is more robust
than the EW-ES control.  
}}

\begin{table}[hbt!]
\begin{center}
\begin{tabular}{lcccccc} \toprule
Strategy & $\kappa$ &  $E[\sum_i \qq_i]/M$ & LS($\XL=0$) & ES(5\%) & $Prob[W_T < 0]$ & $\XL$ \\
 \\ \midrule
EW-ES & 0.5925 & 52.97 &    -11.106  &        -102.36 &   .271  &        -31.15 \\
EW-LS & 9.3822 & 52.99 &   -5.3332   &        -106.66 &   .048  &        0.0    \\
EW-PS & 2670.9 & 53.04 &   -9.2747   &        -185.40 &    .027 &       0.0  \\
\bottomrule
\end{tabular}
\caption{
Synthetic market, summary statistics for EW-PS, EW-LS, and EW-ES objective functions, $EW \simeq 53$ for
all strategies.
LS refers to $E[ \min( W_T-\XL, 0) ]$, ES(5\%) is the mean of the worst
five per cent of the outcomes.  $\XL$ is specified for EW-PS and EW-LS,
while it is an outcome of the EW-ES optimization.
Scenario in Table~\ref{base_case_1}. Parameters in Table~\ref{fit_params}.
Units: thousands of dollars (real). $M$ is the total
number of withdrawals (rebalancing dates).
}
\label{CDF_table}
\end{center}
\end{table}

\begin{table}[hbt!]
\begin{center}
\begin{tabular}{lcccc} \toprule
Strategy &  $E[\sum_i \qq_i]/M$ & LS($\XL=0$) & ES(5\%) & $Prob[W_T < 0]$ \\
\midrule
   EW-ES &  53.18 &  -5.4192   & -46.21 & 0.250\\
   EW-LS &  52.93 &  -1.5448 & -30.85 &    0.0226\\
   EW-PS &  53.12 &    -3.705  & -74.02 & 0.0120\\
\bottomrule
\end{tabular}
\caption{
Block bootstrap resampling, summary statistics for EW-PS, EW-LS, and EW-ES objective functions, $EW \simeq 53$ for
all strategies. Blocksize two years, $10^6$ bootstrap resamples.
Optimal controls computed in the synthetic market.
LS refers to $E[ \min( W_T-\XL, 0) ]$, ES(5\%) is the mean of the worst
five per cent of the outcomes.
Scenario in Table~\ref{base_case_1}. Parameters in Table~\ref{fit_params}.
Units: thousands of dollars (real). $M$ is the total
number of withdrawals (rebalancing dates).
}
\label{CDF_table_boot}
\end{center}
\end{table}

Another test of robustness is shown in Table \ref{Rank_boot}.  Here, we rank each strategy, in terms
of performance, according to each risk criteria, in the historical market. All strategies have
approximately the same $EW \simeq 53$.  In this case, we can see that EW-LS is
the clear winner.

\begin{table}[hbt!]
\begin{center}
\begin{tabular}{lcccc} \toprule
    Strategy     & \multicolumn{4}{c}{Rank} \\
           \hline
            &   LS($\XL=0$) & ES(5\%) & $Prob[W_T < 0]$ & Total Score\\
\midrule
   EW-ES &   3   & 2 & 3 & 8\\
   EW-LS &   1  & 1 &  2 &4 \\
   EW-PS &   2  & 3 & 1 & 6\\
\bottomrule
\end{tabular}
\caption{
Ranking of strategies, historical market.  Each strategy is ranked (first, second or third).
Optimal controls computed in the synthetic market.
Total score is the sum of the rows, smaller is better.
$EW \simeq 53$ for all strategies.  Data is from Table \ref{CDF_table_boot}.
Block bootstrap resampling, summary statistics for EW-PS, EW-LS, and EW-ES objective functions.
Blocksize two years, $10^6$ bootstrap resamples.
LS refers to $E[ \min( W_T-\XL, 0) ]$, ES(5\%) is the mean of the worst
five per cent of the outcomes.
Scenario in Table~\ref{base_case_1}. Parameters in Table~\ref{fit_params}.
Units: thousands of dollars (real). 
}
\label{Rank_boot}
\end{center}
\end{table}

\subsection{Comparison with the Bengen strategy}
Consider Figure \ref{EW_PS_boot_fig}, with EW $\simeq 50$.  
This translates to average withdrawals of 5\% of initial wealth
with $Prob[W_T <0] < 1\%$.
Contrast this with the bootstrapped results for the Bengen strategy, where the withdrawals
are 40 per year ($4\%$ of initial wealth (real)), with a probability of failure $>10\%$.

Similarly, Figure \ref{EW_ES_boot_fig} has (EW,ES) = (50,0) for the
EW-ES optimal strategy, compared
with (EW,ES) = $\simeq$ (40, -350) for the Bengen strategy.

Finally, Figure \ref{EW_LS_boot_fig} gives (EW,LS) = (50,0) for the EW-LS optimal
policy, compared with (EW,LS) = $\simeq$ (40, -20) for the Bengen strategy.

Of course, all these comparisons come with the caveat that the Bengen
strategy withdraws a fixed amount per year, while the results for the
optimal strategies are in terms of expected withdrawals.

\section{CDFs of the optimal strategies}

Figure \ref{CDF_boot_synth_all_fig} shows the CDF curves for the final wealth $W_T$ for
all three strategies.  The results are shown for both
the synthetic and historical market.  For each strategy, the point on the efficient frontier
was selected so that $EW \simeq 53$.  It is interesting to observe that all strategies
have similar CDFs for $X > 0$ ($Prob[W_T > 0]$), 
and rapidly increase to the right of this point.  
This indicates that
all strategies are efficient in the sense that there is little unspent wealth
at $t= 30$ years (age 95). This contrasts with the Bengen policy, which has a non-trivial
probability of either running out of cash or  ending up with large unspent wealth.

Figure \ref{CDF_boot_fig_zoom} focuses on the area of the CDF curves near $X = 0$.
Examining the synthetic market results, Figure \ref{CDF_synth_fig_zoom},
we can see that the EW-PS and EW-LS curves behave very similarly near $W_T = 0$, but there is a 
difference in the left tail, as might be expected.  We can see that EW-PS does an
excellent job of producing small $Prob[ W_T < 0]$.  However, this strategy does 
not do well  in the left tail compared with EW-LS.
The EW-ES strategy, on the other hand, has a fairly high probability that $W_T < 0$, compared
with either EW-PS or EW-LS.
However, this is a bit misleading, since the  EW-ES CDF plots below the other strategies for
$X < -40$.   The historical market CDFs, Figure \ref{CDF_boot_fig_zoom}, are qualitatively
similar to the synthetic market curves.

\begin{figure}[tb]
\centerline{%
\begin{subfigure}[t]{.40\linewidth}
\centering
\includegraphics[width=\linewidth]{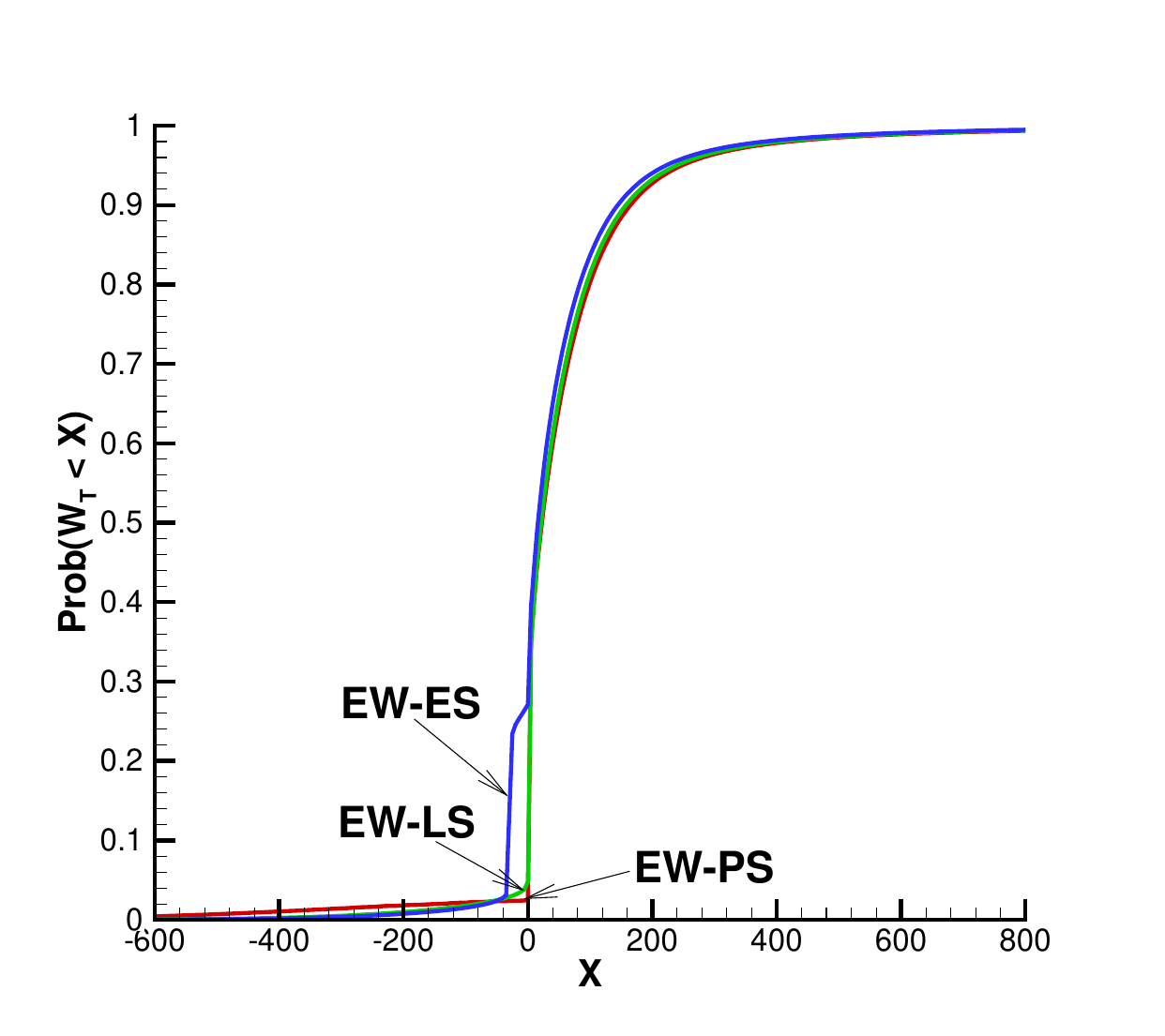}
\caption{CDF curves, synthetic market}
\label{CDF_synth_fig}
\end{subfigure}
\begin{subfigure}[t]{.40\linewidth}
\centering
\includegraphics[width=\linewidth]{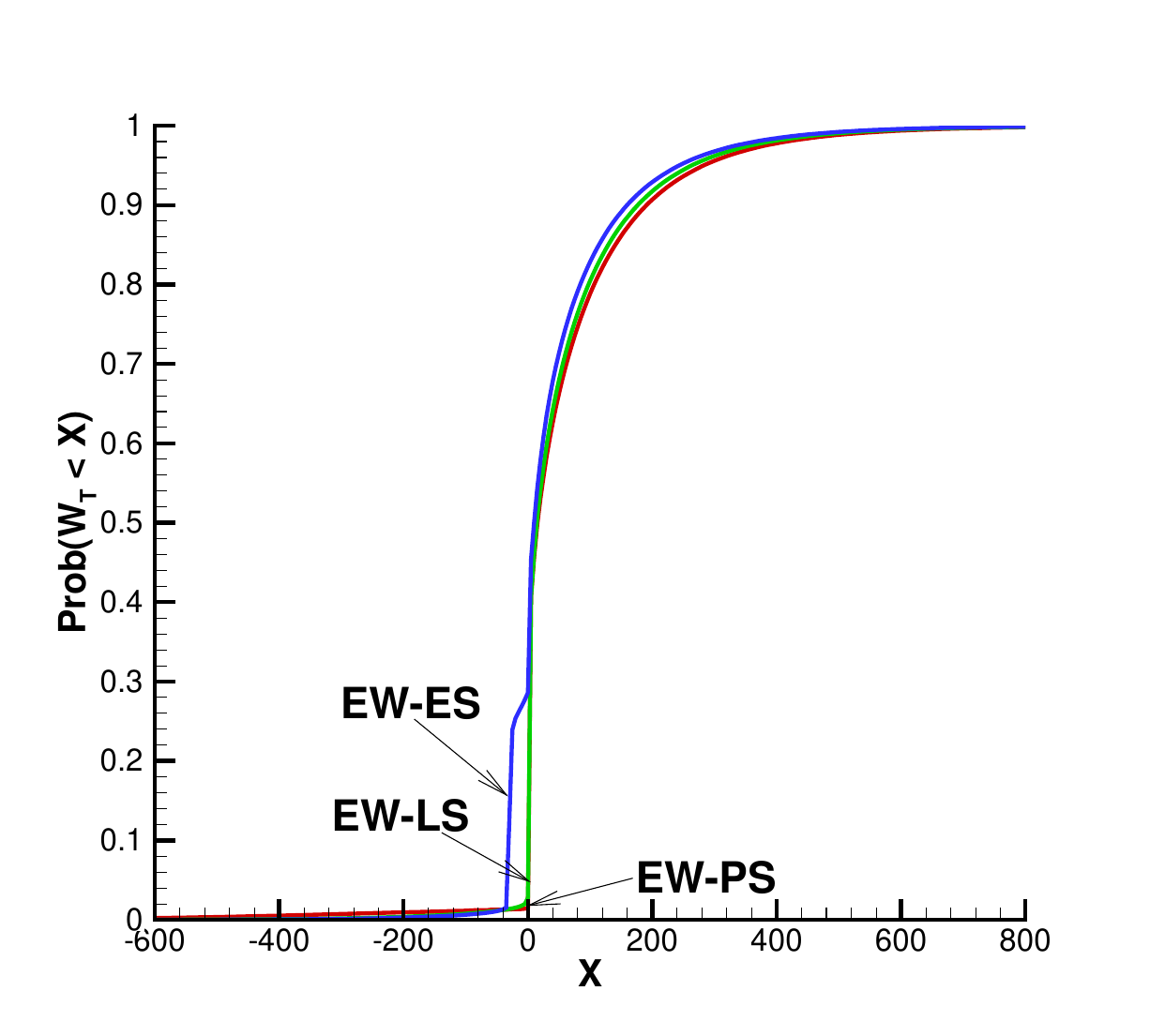}
\caption{
  CDF  curves, bootstrapped historical market.
}
\label{CDF_boot_fig}
\end{subfigure}
}
\caption{
CDF curves, all strategies have the same average EW $\simeq 53$.
Optimal controls computed using the synthetic market model.  Tests in
the synthetic market Figure \ref{CDF_synth_fig} and the historical
market, Figure \ref{CDF_boot_fig} shown.
Expected blocksize: two years.
Real stock index: deflated real capitalization
weighted CRSP, real bond index: deflated 30 day T-bills. Scenario in
Table~\ref{base_case_1}. Parameters in Table~\ref{fit_params}.
}

\label{CDF_boot_synth_all_fig}
\end{figure}

\begin{figure}[tb]
\centerline{%
\begin{subfigure}[t]{.40\linewidth}
\centering
\includegraphics[width=\linewidth]{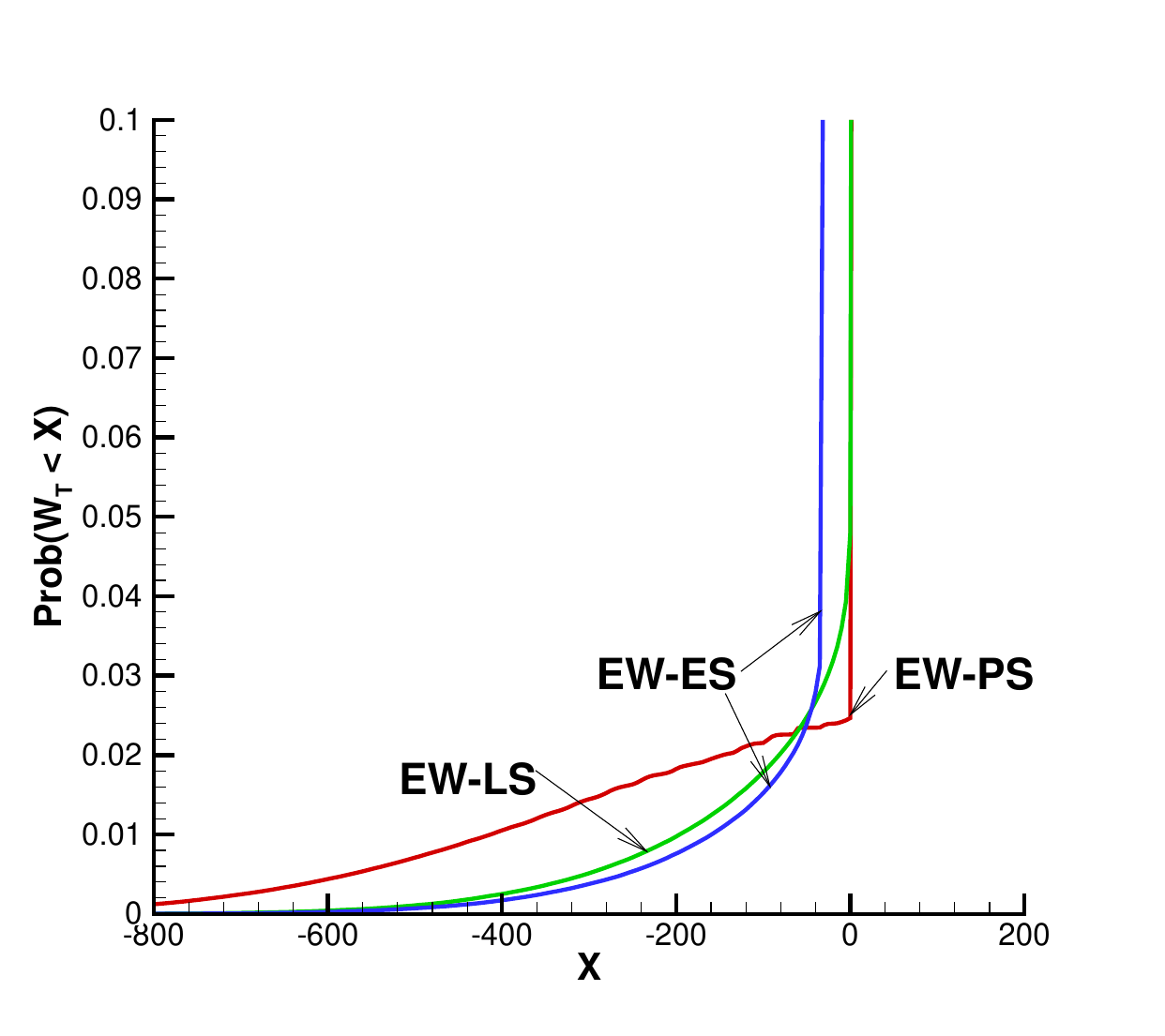}
\caption{CDF curves, synthetic market}
\label{CDF_synth_fig_zoom}
\end{subfigure}
\begin{subfigure}[t]{.40\linewidth}
\centering
\includegraphics[width=\linewidth]{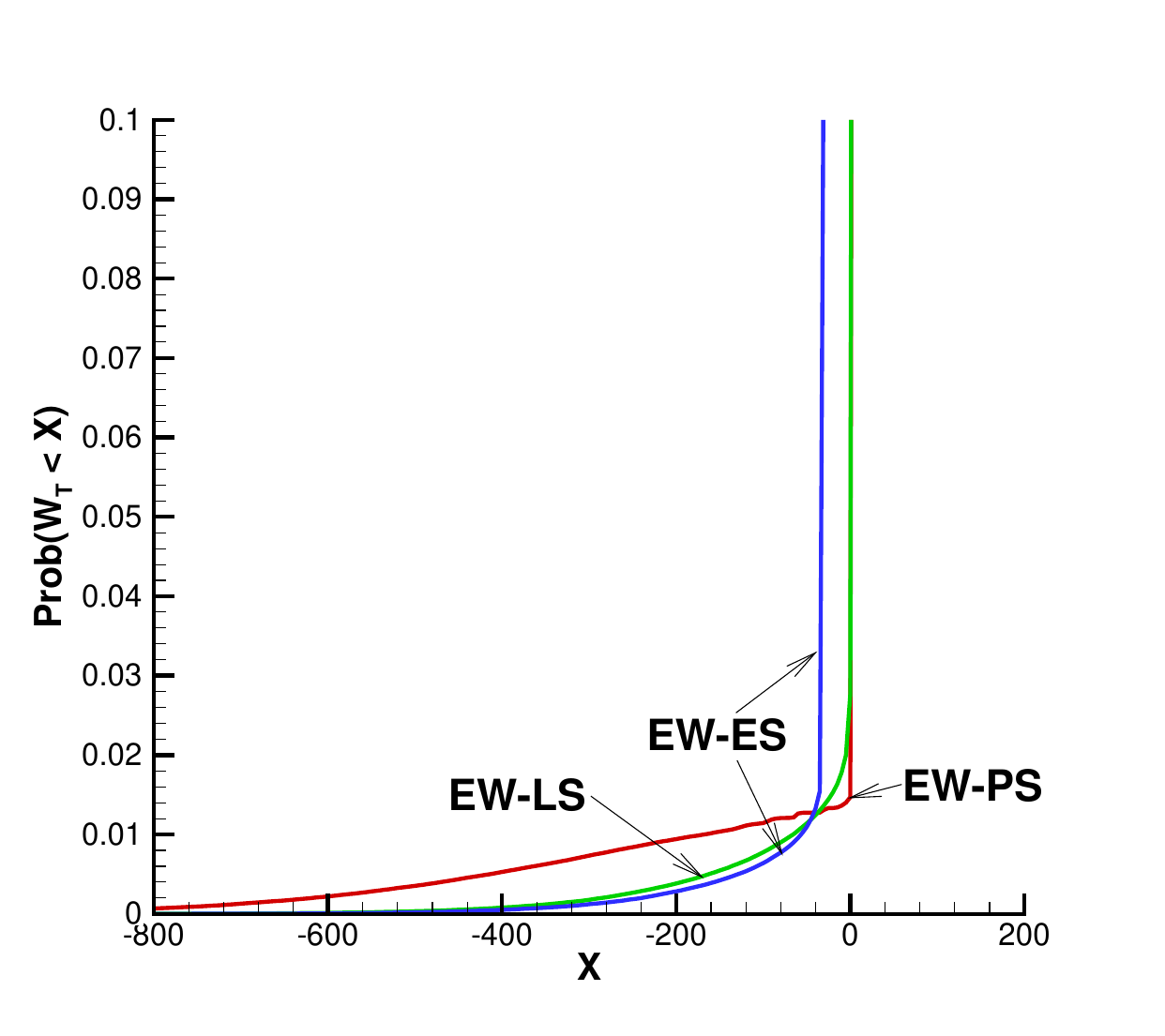}
\caption{
  CDF  curves, bootstrapped historical market.
  $10^6$ bootstrap resamples.
}
\label{CDF_boot_fig_zoom}
\end{subfigure}
}
\caption{
Zoomed plots, CDF curves, all strategies have the same average EW $\simeq 53$.
Optimal controls computed using the synthetic market model.  Tests in
the synthetic market Figure \ref{CDF_synth_fig} and the historical
market, Figure \ref{CDF_boot_fig} shown.
Expected blocksize: two years.
Real stock index: deflated real capitalization
weighted CRSP, real bond index: deflated 30 day T-bills. Scenario in
Table~\ref{base_case_1}. Parameters in Table~\ref{fit_params}.
}

\label{CDF_boot_synth_all_zoom}
\end{figure}

\section{Comments on EW-LS, EW-PS and EW-ES strategies}
The EW-PS optimal control, using PS risk (probability of running out of savings),
seems  at first sight to be an appealing intuitive strategy.  However, 
the CDF of the final wealth shows that this strategy generates
a very fat left tail.  This is simply due to the fact that
PS risk does not weight the amounts less than $\XL$.  

The EW-ES optimal control also has a simple intuitive interpretation.
The ES (mean of the worst 5\% of the outcomes) is a dollar amount
that can be compared with, for example, the retiree's real estate
hedge of last resort.  However, in some cases, the ES can be large
and positive, which does not correspond to what we would normally
think of as risk.  In addition, EW-ES is formally time inconsistent.
There is, of course, an induced time consistent policy, which
is simply the EW-LS control with suitable $\XL$.

The EW-LS control is  trivially time-consistent.  The
investor specified parameter $\XL$ in the EW-LS objective function is
easily interpreted as the disaster level of final wealth.
The EW-LS controls also perform reasonably well using
ES (expected shortfall) or PS (probability of shortfall) as
risk measures.
The EW-LS control is also more robust, when tested in the
historical (bootstrapped) market, compared to the other
strategies.

Consequently, we recommend use of the EW-LS control
for decumulation.

\section{Detailed results: EW-LS, historical market}
Figure \ref{percentile_control_fig} shows the percentiles
of the optimal fraction in stocks, versus time, in the
historical market.  Initially, the fraction in stocks
is a bit less than $0.60$.  The median fraction drops
smoothly down to zero  near year 29.  At the fifth percentile,
complete de-risking occurs at about year 16.  In the case of
poor investment returns,  the allocation to stocks is 0.60-0.80
at the 95th percentile.

Figure \ref{percentile_wealth_fig} shows the wealth percentiles
in the historical market.  We can see that $W_T$ just approaches zero
at the 5th percentile, at year 29.  Again, we remind the reader
that it is assumed that the retiree has real estate which can be
used to fund a shortfall at less than the 5th percentile.  
The expected shortfall at the 5\% level in this case is about $-30$.
Assuming that a reverse mortgage can be obtained for one half
the value of the real estate, this suggests that real estate
valued (in real terms) $ > 60,000$ can manage this risk.

Finally, we can see from Figure \ref{percentile_qplus_fig} that the
median withdrawal rapidly increases to the maximum withdrawal by year one.

\begin{figure}[tb]
\centerline{%
\begin{subfigure}[t]{.33\linewidth}
\centering
\includegraphics[width=\linewidth]{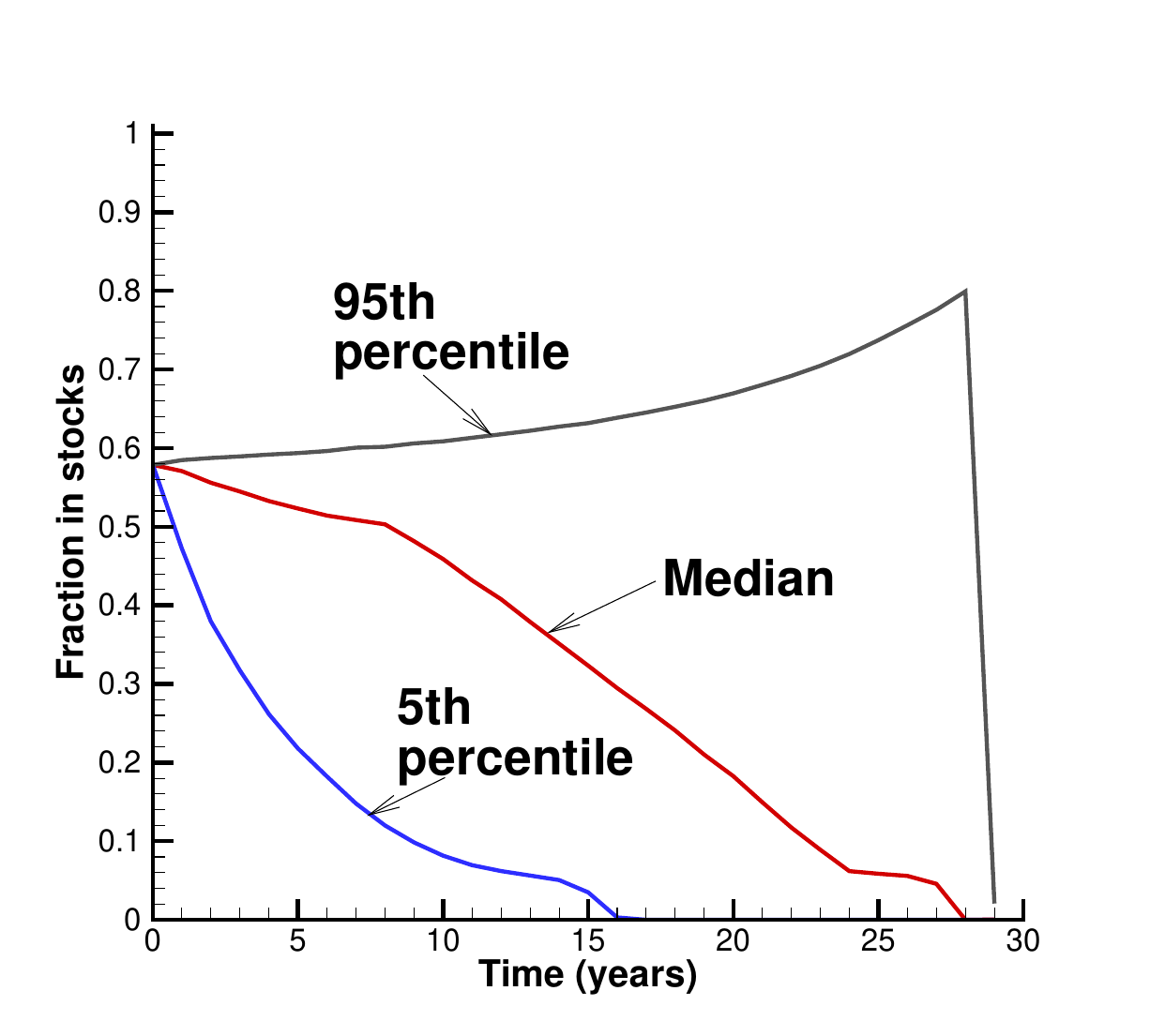}
\caption{Percentiles fraction in stocks}
\label{percentile_control_fig}
\end{subfigure}
\begin{subfigure}[t]{.33\linewidth}
\centering
\includegraphics[width=\linewidth]{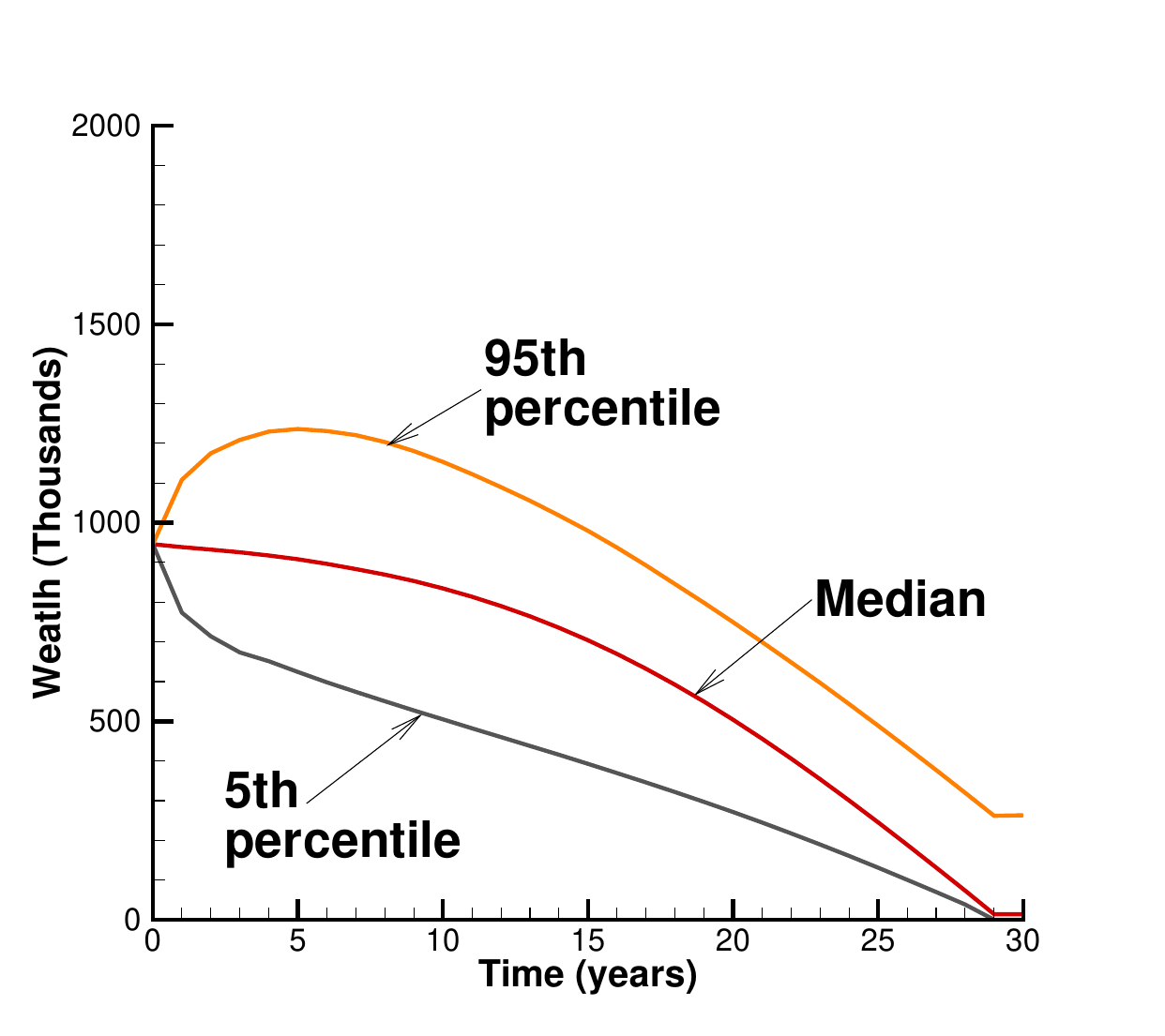}
\caption{Percentiles  wealth}
\label{percentile_wealth_fig}
\end{subfigure}
\begin{subfigure}[t]{.33\linewidth}
\centering
\includegraphics[width=\linewidth]{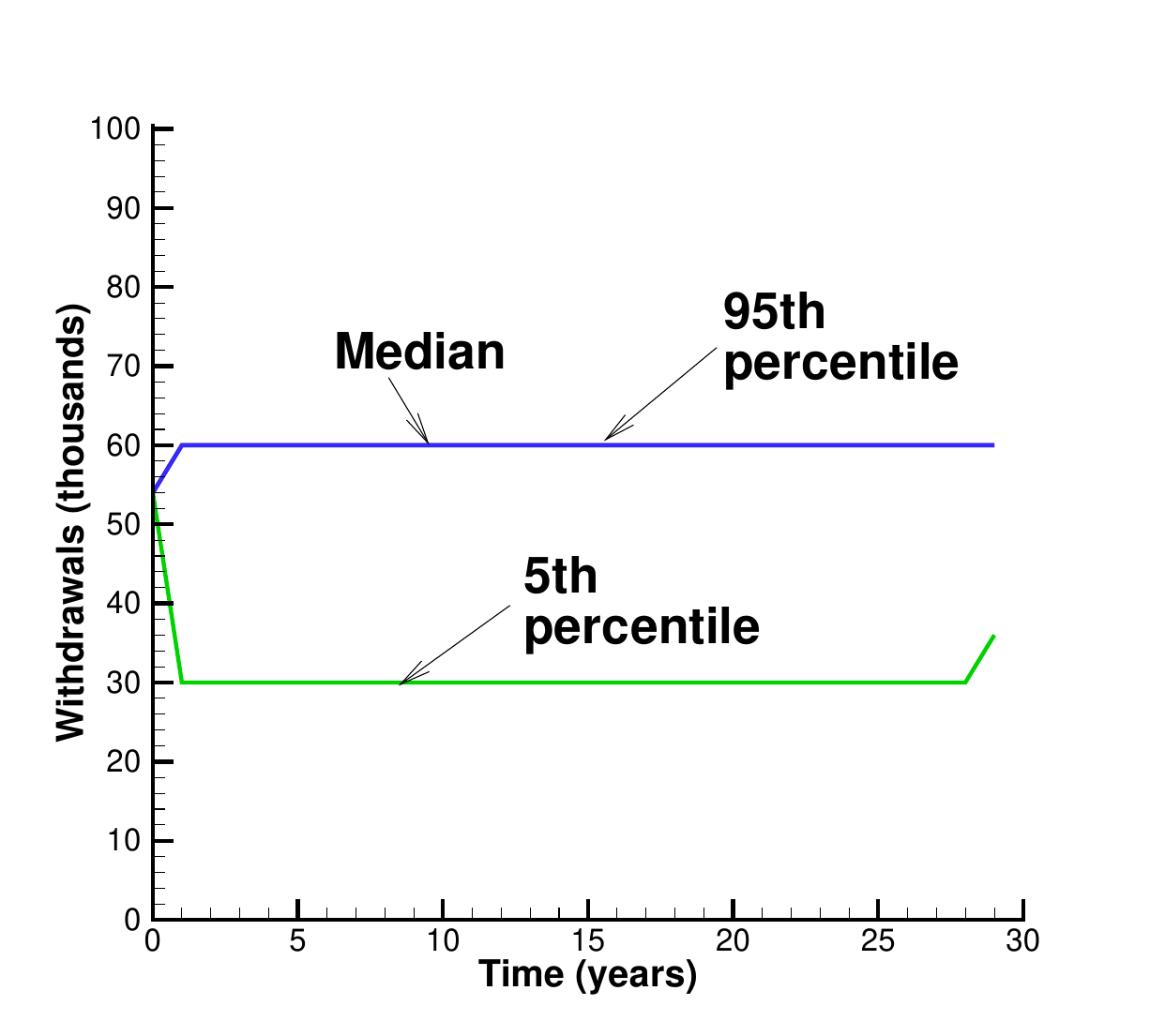}
\caption{Percentiles withdrawals}
\label{percentile_qplus_fig}
\end{subfigure}
}
\caption{
Scenario in Table \ref{base_case_1}. EW-LS control computed from problem
EW-LS Problem~(\ref{PCES_a}). Parameters based on the real CRSP index,
and real 30-day T-bills (see Table~\ref{fit_params}). Control computed
and stored from the Problem~(\ref{PCES_a}) in the synthetic market.
Control used in the historical market, $10^6$ bootstrap samples.
$q_{min} = 30, q_{\max} = 60$ (per year), $EW \simeq 53.0$.
Units: thousands of dollars.
Expected blocksize two years.
}
\label{percentiles_EW_LS_figs}
\end{figure}

The heat maps for the optimal fraction in
stocks and the optimal withdrawals are shown in
Figure \ref{heat_map_EW_LS}.  Figure \ref{heat_map_EW_LS_qplus}
shows that the withdrawal control is approximately bang-bang,
i.e. it is only ever optimal to withdraw $\qq_{\max}$ or
$\qq_{\min}$ and nothing in between.  For an explanation
of this, see \citet{forsyth:2022}.

\begin{figure}[htb!]
\centerline{
\begin{subfigure}[t]{.4\linewidth}
\centering
\includegraphics[width=\linewidth]{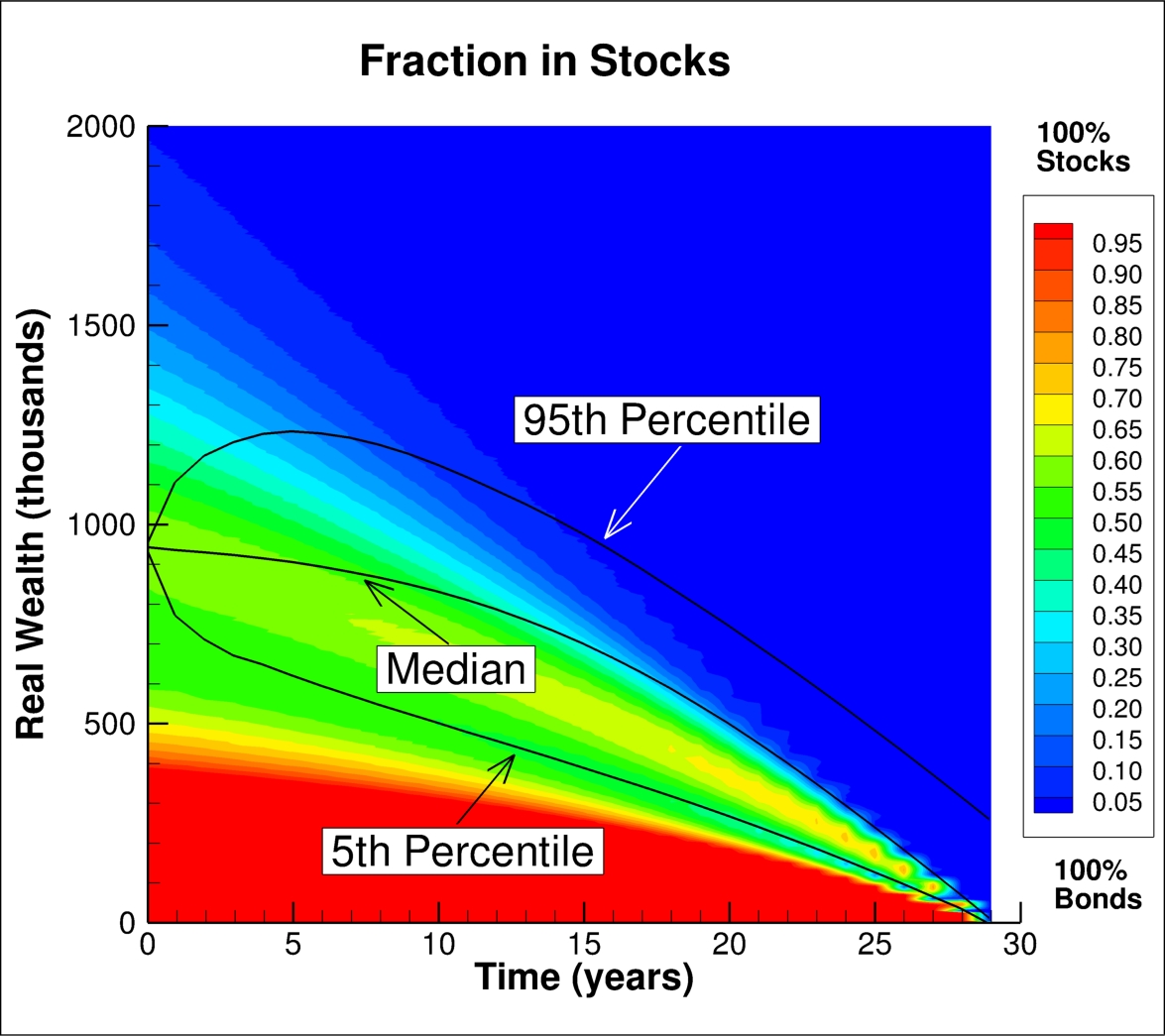}
\caption{Fraction in stocks}
\label{heat_map_EW_LS_allocation}
\end{subfigure}
\hspace{.05\linewidth}
\begin{subfigure}[t]{.4\linewidth}
\centering
\includegraphics[width=\linewidth]{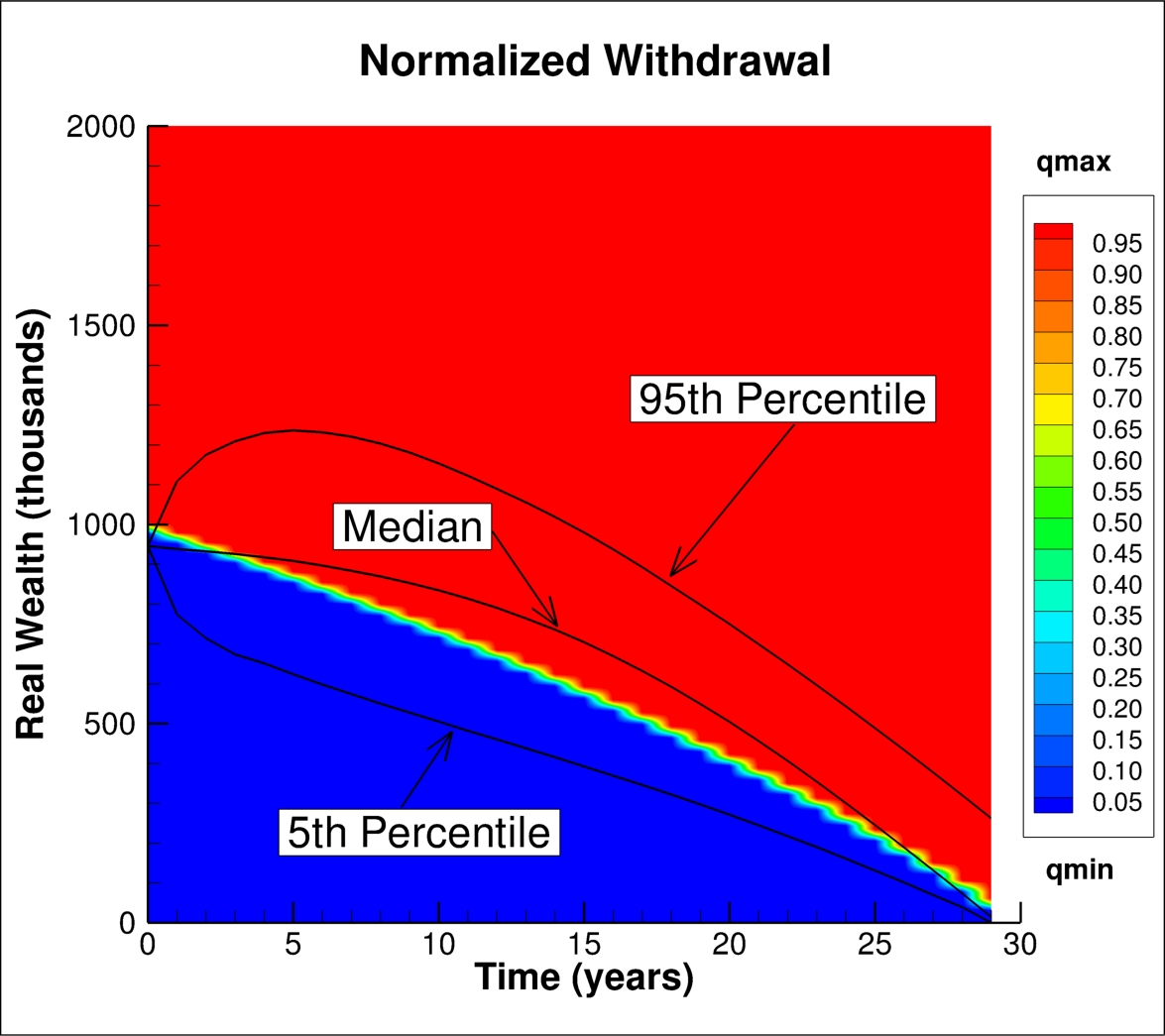}
\caption{Withdrawals}
\label{heat_map_EW_LS_qplus}
\end{subfigure}
}
\caption{
Optimal EW-LS. Heat map of controls: fraction in stocks and withdrawals,
computed from Problem EW-LS~(\ref{PCES_a}). Real
capitalization weighted CRSP index, and real 30-day T-bills. Scenario
given in Table~\ref{base_case_1}. Control computed and stored from the
Problem \ref{PCES_a} in the synthetic market. $q_{min} = 30, q_{\max} =
60$ (per year). $EW \simeq 53.0$.  Percentiles from bootstrapped
historical market.
Normalized withdrawal $(q - q_{\min})/(q_{\max} - q_{\min})$. Units:
thousands of dollars.
}
\label{heat_map_EW_LS}
\end{figure}

\section{Conclusions}
As noted in \citet{Anarkulova_2022_a}, retirees and wealth advisors
demonstrate a revealed preference for spending rules for
decumulation of DC pension plans.
Almost all previous work on spending rules  postulates
heuristic strategies and  tests these rules using
historical data.

We follow a different methodology here.  We determine
the spending rules as the solution of an optimal
stochastic control problem.  The control problem is
solved numerically, based on a parametric model of long term stock
and bond returns.  

For an optimal control problem, the first
order of business is to specify the objective function,
in terms of risk and reward.  Since we allow variable
withdrawals (subject to maximum and minimum constraints)
we define reward as the total expected (real) withdrawals
over  a 30 year retirement (EW).  

We assess and compare  ES, LS, and PS risk measures.
We establish mathematically that,
under certain assumptions, 
the set  of optimal controls associated with all expected reward and 
expected shortfall (EW-ES) Pareto efficient frontier curves  {\myblue{ is identical to}} 
the set  of optimal controls for all  expected reward 
and linear shortfall (EW-LS) Pareto efficient frontier curves.
This has the consequence
that the set of optimal controls for $\EWES$ are time
consistent under the $\EWLS$ risk measure.

Based on our analysis and
computational assessment of various risk measures, we conclude that
risk as measured by linear shortfall LS, i.e.
linearly weighting the final wealth below zero,
is an appropriate risk measure. 

As noted, the optimal EW-LS control is computed
using a parametric market model. However, this
control has been tested out-of-sample using
block bootstrap resampling of historical data. 
These tests show that the optimal control is robust
to parameter misspecification.

Bootstrap resampling of historical data shows that
the 4\% rule (initial capital: one million, withdrawing 4\% real of initial capital
per year) has a probability of failure $> 10 \%$,
and expected shortfall ES(5\%) $<-\$350,000$.
In contrast, under bootstrap resampling tests, the EW-LS optimal control can
withdraw 5\% of initial wealth annually, on average, (adjusted
for inflation) with a 98\% probability of success,
with an ES(5\%) $\simeq -\$15,000$.

The EW-LS controls are dynamic.  Both withdrawal amounts
and stock allocation depend on the realized portfolio
wealth (and time to go).  However, the controls are summarized
as easy to interpret heat maps, which makes implementation
of these optimal controls straightforward.

Finally, we note that the optimal controls can be computed
directly from the bootstrapped resampled data, without
specifying a parametric model of the underlying stock
and bond processes.  This requires use of
machine learning techniques \citep{Ni_2022,beating_benchmark_2023,van_staden_2025a}.
These methods also allow use of more assets in terms of
investment choices.  We leave further study of machine
learning techniques in the context of DC decumulation
for future work.

\section{Acknowledgements}
Forsyth's work was supported by the Natural Sciences and Engineering
Research Council of Canada (NSERC) grant RGPIN-2017-03760. Li's work
was supported by a  Natural Sciences and Engineering
Research Council of Canada (NSERC) grant RGPIN-2020-04331.

\section{Declaration}
The authors have no conflicts of interest to report.

\appendix
\section*{Appendix}

\section{Parametric Model}\label{parametric_model_appendix}
We assume that the investor has access to two funds: a broad market
stock index fund and a constant maturity bond index fund. The investment
horizon is $T$. Let $S_t$ and $B_t$ respectively denote the real
(inflation adjusted) \emph{amounts} invested in the stock index and
the bond index respectively. In general, these amounts will depend
on the investor's strategy over time, as well as changes in the real
unit prices of the assets. In the absence of an investor determined
control (i.e.\ cash withdrawals or rebalancing), all changes in $S_t$ and
$B_t$ result from changes in asset prices. We model the stock index as
following a jump diffusion.

In addition, we follow the usual practitioner approach and directly model
the returns of the constant maturity bond index as a stochastic process
\citep[see, e.g.\@][]{Lin_2015,mitchell_2014}.   
As in \citet{mitchell_2014}, we assume that the constant maturity bond
index follows a jump diffusion process. Empirical justification for this
can be found in \citet{Forsyth_Arva_2022}, Appendix A.

Let $ S_{t^-} = S(t - \epsilon), \epsilon \rightarrow 0^+$, i.e.\
$t^-$ is the instant of time before $t$, and let $\xi^s$ be a random
number representing a jump multiplier. When a jump occurs, $S_t = \xi^s
S_{t^-}$. Allowing for jumps permits modelling of non-normal asset
returns. We assume that $\log(\xi^s)$ follows a double exponential
distribution \citep{kou:2002,Kou2004}. If a jump occurs, $u^s$ is the
probability of an upward jump, while $1-u^s$ is the chance of a downward
jump. The density function for $y = \log (\xi^s)$ is
{\color{black}
\begin{linenomath*}
\begin{equation}
f^s(y) = u^s \eta_1^s e^{-\eta_1^s y} {\bf{1}}_{y \geq 0} +
       (1-u^s) \eta_2^s e^{\eta_2^s y} {\bf{1}}_{y < 0}~.
\label{eq:dist_stock}
\end{equation}
\end{linenomath*}
}
We also define
\begin{linenomath*}
\begin{equation}
\gamma^s_{\xi} = E[ \xi^s -1 ] =
  \frac{u^s \eta_1^s}{\eta_1^s - 1} + 
  \frac{(1 - u^s)\eta_2^s}{\eta_2^s + 1} - 1 ~.
\end{equation}
\end{linenomath*}
In the absence of control, $S_t$ evolves according to
\begin{equation}
\frac{dS_t}{S_{t^-}} = 
  \left(\mu^s -\lambda_\xi^s \gamma_{\xi}^s \right) \, dt + 
  \sigma^s \, d Z^s +  d\left( \ds \sum_{i=1}^{\pi_t^s} (\xi_i^s -1) \right) ,
\label{jump_process_stock}
\end{equation}
where $\mu^s$ is the (uncompensated) drift rate, $\sigma^s$ is the
volatility, $d Z^s$ is the increment of a Wiener process, $\pi_t^s$ is
a Poisson process with positive intensity parameter $\lambda_\xi^s$,
and $\xi_i^s$ are i.i.d.\ positive random variables having distribution
(\ref{eq:dist_stock}). Moreover, $\xi_i^s$, $\pi_t^s$, and $Z^s$ are
assumed to all be mutually independent.

Similarly, let the amount in the bond index be $B_{t^-} = B(t -
\epsilon), \epsilon \rightarrow 0^+$. In the absence of control, $B_t$
evolves as
\begin{equation}
\frac{dB_t}{B_{t^-}} = \left(\mu^b -\lambda_\xi^b \gamma_{\xi}^b  
   + \mu_c^b {\bf{1}}_{\{B_{t^-} < 0\}}  \right) \, dt + 
  \sigma^b \, d Z^b +  d\left( \ds \sum_{i=1}^{\pi_t^b} (\xi_i^b -1) \right) ,
\label{jump_process_bond}
\end{equation}
where the terms in equation (\ref{jump_process_bond}) are defined
analogously to equation (\ref{jump_process_stock}). In particular,
$\pi_t^b$ is a Poisson process with positive intensity parameter
$\lambda_\xi^b$, and $\xi_i^b$ has distribution
\begin{linenomath*}
\begin{equation}
f^b( y= \log \xi^b) = u^b \eta_1^b e^{-\eta_1^b y} {\bf{1}}_{y \geq 0} +
       (1-u^b) \eta_2^b e^{\eta_2^b y} {\bf{1}}_{y < 0}~,
\label{eq:dist_bond}
\end{equation}
\end{linenomath*}
and $\gamma_{\xi}^b = E[ \xi^b -1 ]$. $\xi_i^b$, $\pi_t^b$, and
$Z^b$ are assumed to all be mutually independent. The term $\mu_c^b
{\bf{1}}_{\{B_{t^-} < 0\}}$ in equation~(\ref{jump_process_bond})
represents the extra cost of borrowing (the spread).

The diffusion processes are correlated, i.e.\ $d Z^s \cdot d Z^b =
\rho_{sb}~ dt$. The stock and bond jump processes are assumed mutually
independent. See \citet{forsyth_2020_a} for justification of the
assumption of stock-bond jump independence.

We use the threshold technique
\citep{mancini2009,contmancini2011,Dang2015a} to estimate the parameters
for the parametric stochastic process models. Since the index data is in
real terms, all parameters reflect real returns. Table \ref{fit_params}
shows the results of calibrating the models to the historical data.
The correlation $\rho_{sb}$ is computed by removing any returns which
occur at times corresponding to jumps in either series, and then
using the sample covariance. Further discussion of the validity of
assuming that the stock and bond jumps are independent is given in
\citet{forsyth_2020_a}.

{\small
\begin{table}[hbt!]
\begin{center}
\begin{tabular}{cccccccc} \toprule[1pt]
 CRSP & $\mu^s$ & $\sigma^s$ & $\lambda^s$ & $u^s$ &
  $\eta_1^s$ & $\eta_2^s$ & $\rho_{sb}$ \\ \midrule
       & 0.087323  &  0.147716&   0.316326  &  0.225806 & 4.3591 & 5.53370 & 0.095933\\
 \midrule[1pt]
30-day T-bill & $\mu^b$ & $\sigma^b$ & $\lambda^b$ & $u^b$ &
  $\eta_1^b$ & $\eta_2^b$ & $\rho_{sb}$ \\ \midrule
        & 0.0032 & 0.0140& 0.3878  &   0.3947 &  61.5350 & 53.4043 &0.095933
\\
\bottomrule[1pt]
\end{tabular}
\end{center}
\caption{Parameters for parametric market models (\ref{jump_process_stock}
and (\ref{jump_process_bond}, fit to CRSP data (inflation
adjusted) for 1926:1-2023:12.  \label{fit_params} }
\end{table}
}

\section{Numerical Techniques}\label{Numerical_Appendix}
We solve problems (\ref{PCES_a}) using the techniques described in
detail in \citet{forsythlabahn2017,forsyth_2019_c,forsyth:2022}. We
give only a brief overview here.

We localize the infinite domain to $(s,b) \in [s_{\min}, s_{\max}]
\times [b_{\min}, b_{\max}]$, and discretize $[b_{\min},b_{\max}]$
using an equally spaced $\log b$ grid, with $n_b$ nodes. Similarly, we
discretize $[s_{\min}, s_{\max}]$ on an equally spaced $\log s$ grid,
with $n_s$ nodes.
For case $b<0$, we define a reflected grid $b^{\prime} = -b$, with
the $n_b \times n_s$ nodes.  This represents
the insolvent case nodes. The PIDE for $b^{\prime} >0$
has the same form as for $b > 0$.
This idea can be used more generally if leverage is
permitted, which we do not explore in this work.
Localization errors are minimized using the domain
extension method in \citet{forsythlabahn2017}.

At rebalancing dates, we solve the local optimization problem
by discretizing $(\qq (\cdot), \pp(\cdot)
)$ and using exhaustive search. Between rebalancing dates,
we solve a two dimensional partial integro-differential
equation (PIDE) using Fourier methods
\citep{forsythlabahn2017,forsyth:2022}. Finally, in the
case of EW-ES, the outer optimization over $\XL$ is
solved using a one-dimensional method.

We used the value $\epsilon = -10^{-4}$ in equation (\ref{PCES_a}),
which forces the investment strategy to be bond heavy if the remaining
wealth in the investor's account is large, and $t \rightarrow T$. Using
this small value of gave the same results as $\epsilon = 0$ for the
summary statistics, to four digits. This is simply because the states
with very large wealth have low probability. However, this stabilization
procedure produced smoother heat maps for large wealth values, without
altering the summary statistics appreciably.

\subsection{Convergence Test: Synthetic Market}\label{app_convergence}
Table~\ref{conservative_accuracy} shows a detailed convergence test
for the base case problem given in Table \ref{base_case_1}, for the
EW-ES problem. The results are given for a sequence of grid sizes,
for the dynamic programming algorithm in \citep{forsyth:2022}
and Appendix~\ref{Numerical_Appendix}. The dynamic programming algorithm
appears to converge at roughly a second order rate. The optimal control
computed using dynamic programming is stored, and then used in Monte
Carlo computations. The Monte Carlo results are in good agreement with
the dynamic programming solution. For all the numerical examples, we
will use the $2048 \times 2048$ grid, since this seems to be accurate
enough for our purposes.

\begin{table}[hbt!]
\begin{center}
\begin{tabular}{lccc|cc} \toprule
 & \multicolumn{3}{|c|}{Algorithm in  \citep{forsyth:2022}
                        and Appendix \ref{Numerical_Appendix}} 
 & \multicolumn{2}{c}{Monte Carlo} \\ \midrule
Grid & LS & $E[\sum_i \qq_i]/M$ & Value Function &  LS
 & $E[\sum_i \qq_i]/M$ \\ \midrule
$512 \times 512$   & -1.40884 & 50.9082 & 1484.981 & -1.26443 & 50.938 \\
$1024 \times 1024$ & -1.32050 & 50.9491  & 1488.864 & -1.27396 & 50.953 \\ 
$2048 \times 2048$ & -1.30148 & 50.9643  & 1489.880 & -1.28189 & 50.963 \\
\bottomrule
\end{tabular}
\caption{
EW-LS convergence test.
Real stock index: deflated real capitalization
weighted CRSP, real bond index: deflated 30 day T-bills. Scenario in
Table~\ref{base_case_1}. Parameters in Table~\ref{fit_params}. The Monte
Carlo method used $2.56 \times 10^6$ simulations. The MC method used the
control from the solving the PIDEs as described in Appendix \ref{Numerical_Appendix}. $\kappa = 30,
\XL = 0.0$. Grid refers to the grid used in the Algorithm in Appendix
\ref{Numerical_Appendix}: $n_x \times n_b$, where $n_x$ is the number of
nodes in the $\log s$ direction, and $n_b$ is the number of nodes in the
$\log b$ direction. Units: thousands of dollars (real). $M$ is the total
number of withdrawals (rebalancing dates).
}
\label{conservative_accuracy}
\end{center}
\end{table}

\section{Effect of Stabilization term} \label{Appendix_stablizaton}
Recall that the optimization problem, for all objective functions,
becomes ill-posed along any path where $W_t \gg \XL$, $t \rightarrow T$.
To remove this problem, the stabilization term
\begin{eqnarray}
   \epsilon E[ W_T] 
\end{eqnarray}
is added to each objective function.  We set $|\epsilon| \ll 1$, to ensure
that this term has little effect unless we are in the ill-posed region.
Essentially, if $\epsilon < 0$, then this forces the portfolio to invest
100\% in bonds.  On the other hand, if $\epsilon > 0$, then the portfolio
will invest 100\% in stocks.  We remark here that these choices are essentially
arbitrary: by assumption, the 95-year old retiree
has a short life expectancy, and has large wealth, so that even with maximum
withdrawals, there is almost zero probability of running out of cash.

To verify that the choice of positive or negative $\epsilon$ has little effect
near $W_t = \XL$,  Figure \ref{stabilize_PS_fig} shows the CDF curves
for the EW-PS strategy (EW = 53.0), for both positive and negative $\epsilon$.
We can see that both curves overlap for $W_T \leq 100$.  Consequently, left tail risk
measures will be identical for both cases, and there will be
no differences in average withdrawals, since we will be constrained
by the maximum withdrawal specification.  To the right
of $W_T = 100$, we can see that for $\epsilon >0$, there is higher probability
of obtaining larger $W_T$ compared to the case $\epsilon <0$.  This is of
course expected, since investing in all stocks, (when $W_t$ is large) will have a larger expected
portfolio value.

The CDF curves for $\pm \epsilon$ for EW-ES and EW-LS policies are similar.

\begin{figure}[tb]
\centerline{%
\includegraphics[width=2.0in]{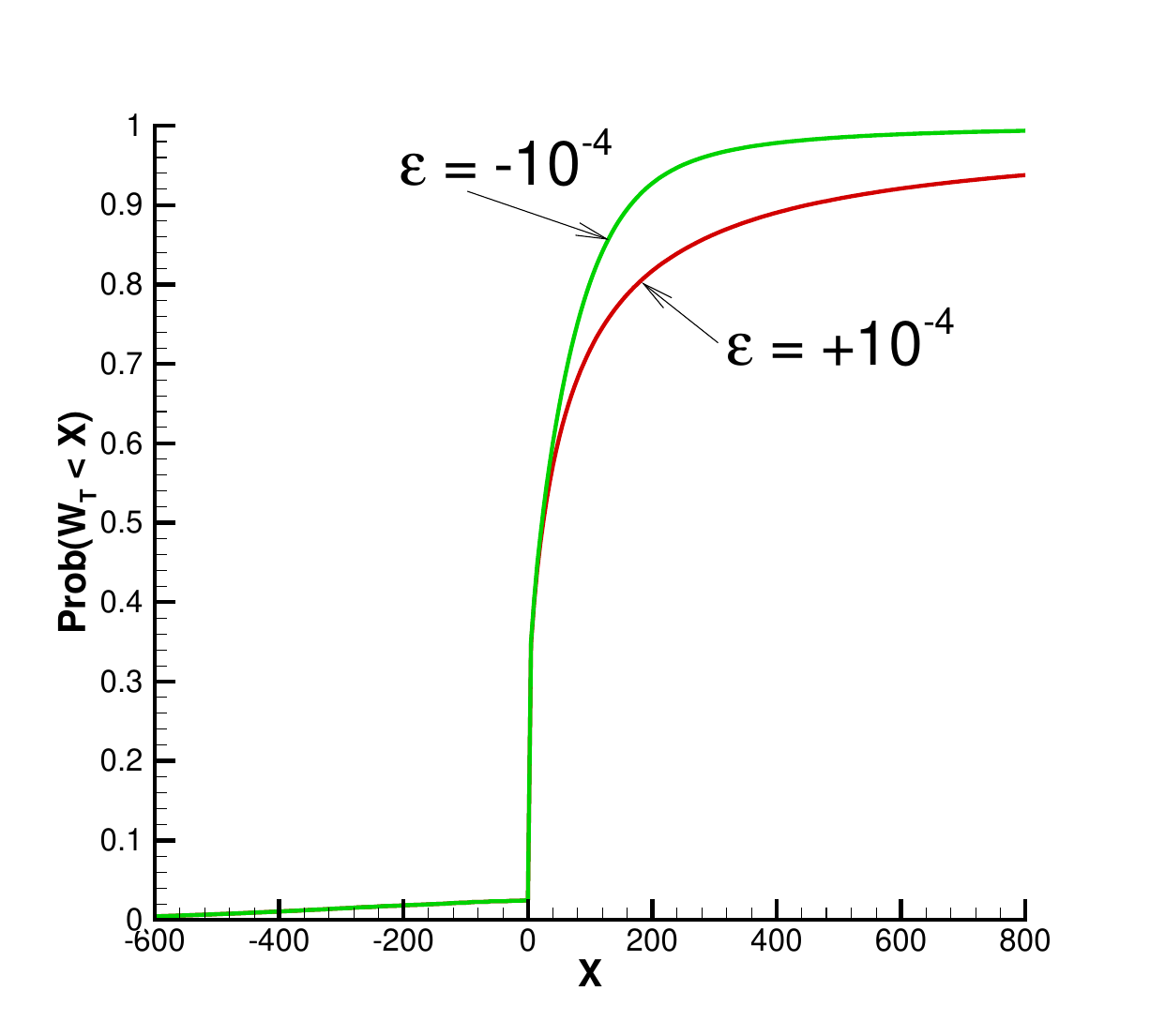}
}
\caption{ 
EW-PS CDF of the terminal wealth, with stabilization parameters shown.
Point on curve where $EW \simeq 53.0$.
Real stock index: deflated real capitalization
weighted CRSP, real bond index: deflated 30 day T-bills. Scenario in
Table~\ref{base_case_1}. Parameters in Table~\ref{fit_params}.
Synthetic market.
}

\label{stabilize_PS_fig}
\end{figure}

\begin{singlespace}
\bibliographystyle{chicago}
\bibliography{paper}
\end{singlespace}

\end{document}